\newtheorem{THM}{Theorem}[section]
\newtheorem{LEM}[THM]{Lemma}
\theoremstyle{definition}
\newtheorem{EX}[THM]{Example}
\DeclareMathOperator{\Small}{Small}
\def\shift(#1)(#2){\!\!\downarrow\!{}^{#1}_{\raise .1ex\vbox to 0pt{\vss\hbox{$\scriptstyle #2$}}}\,}
\def\ucl(#1){\lfloor #1 \rfloor}
\def\dcl(#1){\lceil #1 \rceil}
\def\specrel#1#2{\mathrel{\mathop{\kern0pt #1}\limits_{#2}}}
\newcommand\B{\mathcal B}
\def\S{\mathcal S}
\def\lowfwd #1#2#3{{\mathop{\kern0pt #1}\limits^{\kern#2pt\raise.#3ex
\vbox to 0pt{\hbox{$\scriptscriptstyle\rightarrow$}\vss}}}}
\def\lowbkwd #1#2#3{{\mathop{\kern0pt #1}\limits^{\kern#2pt\raise.#3ex
\vbox to 0pt{\hbox{$\scriptscriptstyle\leftarrow$}\vss}}}}
\def\fwd #1#2{{\lowfwd{#1}{#2}{15}}}
\def\ve{\kern-1.5pt\lowfwd e{1.5}2\kern-1pt}
\def\vedash{{\mathop{\kern0pt e\lower.5pt\hbox{${}
     \scriptstyle'$}}\limits^{\kern0pt\raise.02ex
     \vbox to 0pt{\hbox{$\scriptscriptstyle\rightarrow$}\vss}}}}
\def\ev{\kern-1pt\lowbkwd e{0.5}2\kern-1pt}
\def\vf{\kern-2pt\lowfwd f{2.5}2\kern-1pt}
\def\vfdash{{\mathop{\kern0pt f\raise 1pt\hbox{${}
     \scriptstyle'$}}\limits^{\kern2pt\raise.02ex
     \vbox to 0pt{\hbox{$\scriptscriptstyle\rightarrow$}\vss}}}}
\def\vr{\lowfwd r{1.5}2}
\def\rv{\lowbkwd r02}
\def\vrdash{{\mathop{\kern0pt r\lower.5pt\hbox{${}
     \scriptstyle'$}}\limits^{\kern0pt\raise.02ex
     \vbox to 0pt{\hbox{$\scriptscriptstyle\rightarrow$}\vss}}}}
\def\rvdash{{\mathop{\kern0pt r\lower.5pt\hbox{${}
     \scriptstyle'$}}\limits^{\kern0pt\raise.02ex
     \vbox to 0pt{\hbox{$\scriptscriptstyle\leftarrow$}\vss}}}}
\def\vrdashp{{\mathop{\kern0pt r_p\kern-4pt\lower.5pt\hbox{${}
     \scriptstyle'$}}\limits^{\kern0pt\raise.02ex
     \vbox to 0pt{\hbox{$\scriptscriptstyle\rightarrow$}\vss}}}\,}
\def\rvdashp{{\mathop{\kern0pt r_p\kern-4pt\lower.5pt\hbox{${}
     \scriptstyle'$}}\limits^{\kern0pt\raise.02ex
     \vbox to 0pt{\hbox{$\scriptscriptstyle\leftarrow$}\vss}}}\,}
\def\vrddash{{\mathop{\kern0pt r\lower.5pt\hbox{${}
     \scriptstyle''$}}\limits^{\kern0pt\raise.02ex
     \vbox to 0pt{\hbox{$\scriptscriptstyle\rightarrow$}\vss}}}}
\def\vs{\lowfwd s{1.5}1}
\def\sv{{{\lowbkwd s{1.5}1}\hskip-1pt}}
\def\vsdash{{\mathop{\kern0pt s\lower.5pt\hbox{${}
     \scriptstyle'$}}\limits^{\kern0pt\raise.02ex
     \vbox to 0pt{\hbox{$\scriptscriptstyle\rightarrow$}\vss}}}}
\def\svdash{{\mathop{\kern0pt s\lower.5pt\hbox{${}
     \scriptstyle'$}}\limits^{\kern0pt\raise.02ex
     \vbox to 0pt{\hbox{$\scriptscriptstyle\leftarrow$}\vss}}}}
\def\vsddash{{\mathop{\kern0pt s\lower.5pt\hbox{${}
     \scriptstyle''$}}\limits^{\kern0pt\raise.02ex
     \vbox to 0pt{\hbox{$\scriptscriptstyle\rightarrow$}\vss}}}}
\def\svddash{{\mathop{\kern0pt s\lower.5pt\hbox{${}
     \scriptstyle''$}}\limits^{\kern0pt\raise.02ex
     \vbox to 0pt{\hbox{$\scriptscriptstyle\leftarrow$}\vss}}}}
\def\vsdashp{{\mathop{\kern0pt s_p\kern-4pt\lower.5pt\hbox{${}
     \scriptstyle'$}}\limits^{\kern0pt\raise.02ex
     \vbox to 0pt{\hbox{$\scriptscriptstyle\rightarrow$}\vss}}}\,}
\def\svdashp{{\mathop{\kern0pt s_p\kern-4pt\lower.5pt\hbox{${}
     \scriptstyle'$}}\limits^{\kern0pt\raise.02ex
     \vbox to 0pt{\hbox{$\scriptscriptstyle\leftarrow$}\vss}}}\,}
\def\vsidash{{\mathop{\kern0pt s_i\kern-3.5pt\lower.3pt\hbox{${}
     \scriptstyle'$}}\limits^{\kern0pt\raise.02ex
     \vbox to 0pt{\hbox{$\scriptscriptstyle\rightarrow$}\vss}}}}
\def\vsqdash{{\mathop{\kern0pt s_q\kern-3.5pt\lower.3pt\hbox{${}
     \scriptstyle'$}}\limits^{\kern0pt\raise.02ex
     \vbox to 0pt{\hbox{$\scriptscriptstyle\rightarrow$}\vss}}}}
\def\vS{{\hskip-1pt{\fwd S3}\hskip-1pt}} 
\def\vSstar{{\mathop{\kern0pt S\lower-1pt\hbox{$^*$}}\limits^{\kern2pt
     \vbox to 0pt{\hbox{$\scriptscriptstyle\rightarrow$}\vss}}}}
\def\vSdash{{\mathop{\kern0pt S\lower-1pt\hbox{${}
     \scriptstyle'$}}\limits^{\kern2pt\raise.1ex
     \vbox to 0pt{\hbox{$\scriptscriptstyle\rightarrow$}\vss}}}}
\def\vt{\lowfwd t{1.5}1}
\def\tv{\lowbkwd t{1.5}1}
\def\vU{{\vec U}} 
\def\vO{\mathcal{O}}
\def\sub{\subseteq}
\def\sm{\smallsetminus}
\newcommand\COMMENT[1]{}
\def\?#1{\vadjust{\vbox to 0pt{\vss\vskip-8pt\leftline{%
     \llap{\hbox{\vbox{\pretolerance=-1
     \doublehyphendemerits=0\finalhyphendemerits=0
     \hsize20truemm\tolerance=10000\small
     \lineskip=0pt\lineskiplimit=0pt
     \rightskip=0pt plus16truemm\baselineskip8pt\noindent
     \hskip0pt        
     #1\endgraf}\hskip2truemm}}}\vss}}}
\newenvironment{txteq*}
  {
    \begin{equation*}
    \begin{minipage}[c]{0.85\textwidth} 
    \em                                
  }
  {\end{minipage}\end{equation*}\ignorespacesafterend}
\renewcommand\S{\mathcal S}
\newcommand{\braces}[1]{\left(#1\right)}
\newcommand{\menge}[1]{\left\{#1\right\}}
\newcommand{\class}[1]{\left[#1\right]}
\newcommand{\tn}[1]{\textnormal{#1}}
\def\U{\mathcal{U}}
\def\lowfwd #1#2#3{{\mathop{\kern0pt #1}\limits^{\kern#2pt\raise.#3ex
\vbox to 0pt{\hbox{$\scriptscriptstyle\rightarrow$}\vss}}}}
\def\lowbkwd #1#2#3{{\mathop{\kern0pt #1}\limits^{\kern#2pt\raise.#3ex
\vbox to 0pt{\hbox{$\scriptscriptstyle\leftarrow$}\vss}}}}
\def\fwd #1#2{{\lowfwd{#1}{#2}{15}}}
\def\ve{\kern-1pt\lowfwd e{1.5}2\kern-1pt}
\def\ev{\kern-1pt\lowbkwd e{1.5}2\kern-1pt}
\def\vr{\lowfwd r{1.5}2}
\def\rv{\lowbkwd r02}
\def\vrdash{{\mathop{\kern0pt r\lower.5pt\hbox{${}
     \scriptstyle'$}}\limits^{\kern0pt\raise.02ex
     \vbox to 0pt{\hbox{$\scriptscriptstyle\rightarrow$}\vss}}}}
\def\rvdash{{\mathop{\kern0pt r\lower.5pt\hbox{${}
     \scriptstyle'$}}\limits^{\kern0pt\raise.02ex
     \vbox to 0pt{\hbox{$\scriptscriptstyle\leftarrow$}\vss}}}}
\def\vs{\lowfwd s{1.5}1}
\def\sv{\lowbkwd s{1.5}1}
\def\vsidash{{\mathop{\kern0pt s_i\kern-3.5pt\lower.3pt\hbox{${}
     \scriptstyle'$}}\limits^{\kern0pt\raise.02ex
     \vbox to 0pt{\hbox{$\scriptscriptstyle\rightarrow$}\vss}}}}
\def\vS{{\hskip-1pt{\fwd S3}\hskip-1pt}} 
\def\vSr{{\vec S}_{\raise.1ex\vbox to 0pt{\vss\hbox{$\scriptstyle\ge\vr$}}}}
\def\vSdash{{\mathop{\kern0pt S\lower-1pt\hbox{${}
     \scriptstyle'$}}\limits^{\kern2pt\raise.1ex
     \vbox to 0pt{\hbox{$\scriptscriptstyle\rightarrow$}\vss}}}}
 \def\vUdash{{\mathop{\kern0pt U\lower-1pt\hbox{${}
 				\scriptstyle'$}}\limits^{\kern2pt\raise.1ex
 			\vbox to 0pt{\hbox{$\scriptscriptstyle\rightarrow$}\vss}}}}
\def\vsdash{{\mathop{\kern0pt s\lower.5pt\hbox{${}
     \scriptstyle'$}}\limits^{\kern0pt\raise.02ex
     \vbox to 0pt{\hbox{$\scriptscriptstyle\rightarrow$}\vss}}}}
\def\svdash{{\mathop{\kern0pt s\lower.5pt\hbox{${}
     \scriptstyle'$}}\limits^{\kern0pt\raise.02ex
     \vbox to 0pt{\hbox{$\scriptscriptstyle\leftarrow$}\vss}}}}
\def\vtdash{{\mathop{\kern0pt t\lower0pt\hbox{${}
     \scriptstyle'$}}\limits^{\kern0pt\raise.1ex
     \vbox to 0pt{\hbox{$\scriptscriptstyle\rightarrow$}\vss}}}}
\def\tvdash{{\mathop{\kern0pt t\lower0pt\hbox{${}
     \scriptstyle'$}}\limits^{\kern0pt\raise.1ex
     \vbox to 0pt{\hbox{$\scriptscriptstyle\leftarrow$}\vss}}}}
\def\vddash{{\mathop{\kern0pt d\raise1pt\hbox{${}
     \scriptstyle'$}}\limits^{\kern0pt\raise.02ex
     \vbox to 0pt{\hbox{$\scriptscriptstyle\rightarrow$}\vss}}}}
\def\dvdash{{\mathop{\kern0pt d\raise1pt\hbox{${}
     \scriptstyle'$}}\limits^{\kern0pt\raise.02ex
     \vbox to 0pt{\hbox{$\scriptscriptstyle\leftarrow$}\vss}}}}
\def\vtstar{{\mathop{\kern0pt t\raise2.5pt\hbox{${}
     \scriptstyle*$}}\limits^{\kern0pt\raise.1ex
     \vbox to 0pt{\hbox{$\scriptscriptstyle\rightarrow$}\vss}}}}
\def\tvstar{{\mathop{\kern0pt t\raise2.5pt\hbox{${}
     \scriptstyle*$}}\limits^{\kern0pt\raise.1ex
     \vbox to 0pt{\hbox{$\scriptscriptstyle\leftarrow$}\vss}}}}
\def\vtstarD{{\mathop{\kern0pt t\kern.5pt\raise3pt\hbox{${}
     \scriptstyle*$}{\kern-5.5pt\lower3pt\hbox{$
     \scriptstyle D$}}}\limits^{\kern0pt\raise.1ex
     \vbox to 0pt{\hbox{$\scriptscriptstyle\rightarrow$}\vss}}}}
\def\tvstarD{{\mathop{\kern0pt t\kern.5pt\raise3pt\hbox{${}
     \scriptstyle*$}{\kern-5.5pt\lower3pt\hbox{$
     \scriptstyle D$}}}\limits^{\kern0pt\raise.1ex
     \vbox to 0pt{\hbox{$\scriptscriptstyle\leftarrow$}\vss}}}}
\def\vt{\lowfwd t{1.5}1}
\def\tv{\lowbkwd t{1.5}1}
\def\vU{{\vec U}} 
\def\vO{\mathcal{O}}
\def\vm{\lowfwd m{1.5}1}
\def\mv{\lowbkwd m{1.5}1}
\def\vt{\lowfwd t{1.5}1}
\def\tv{\lowbkwd t{1.5}1}
\def\vx{\lowfwd x{1.5}1}
\def\xv{\lowbkwd x{1.5}1}
\def\vy{\lowfwd y{1.5}1}
\def\vE{\lowfwd E{1.5}1}
\def\B{\mathcal{B}}
\def\join{\vee}
\def\meet{\wedge}
\title{Separations of sets}
\author{Nathan Bowler\and Jakob Kneip} %
\begin{document}
\abovedisplayshortskip=-3pt plus3pt
\belowdisplayshortskip=6pt

\maketitle

\begin{abstract}\noindent
	Abstract separation systems are a new unifying framework in which separations of graph, matroids and other combinatorial structures can be expressed and studied.
	
	We characterize the abstract separation systems that have representations as separation systems of graphs, sets, or set bipartitions.
\end{abstract}

\section{Introduction}\label{sec:intro}

Separations of graphs have been studied ever since Robertson and Seymour introduced the notion of {\em tangles} of graphs in~\cite{GMX}. Tangles allow one to describe highly cohesive regions and objects in a graph not directly, say by listing the vertices belonging to that region or object, but rather indirectly by simply stating for each of the graph's low order separations which of the two sides of that separation (most of) the region lies on. The upside of this approach is that the highly cohesive region can be described in this way even if it is a little fuzzy -- e.g., when every individual vertex or edge lies on the wrong side of some low-order separation, such as in the case of a large grid: for every low-order separation, almost all of the grid will lie on one side of it, giving rise to a `consistent' orientation of the low-order separations, but every individual vertex of the grid lies on the `other' side of the separator consisting of just its four neighbours.

In~\cite{AbstractSepSys}, abstract separation systems were introduced to axiomatize, and thereby generalize, this notion of separations and tangles in graphs, so as to make it applicable to cohesive structures also in matroids and other combinatorial objects. This general framework is flexible enough to deal with a plethora of different applications (see~\cite{ProfilesNew,TangleTreeAbstract,MonaLisa} for more). Furthermore, some central structure theorems about tangles can be proved in this setting and then applied to more specific applications, for instance to obtain an elementary proof of Robertson and Seymour's duality theorem connecting tree-width and brambles, which started the study of tangles in~\cite{GMX}.


While treating separations at this abstract level can make the behaviour of their concrete instances more transparent, we need those concrete types of separation to guide our intuition also when we study abstract separation systems. We are therefore led to consider the representation problem familiar from other algebraic contexts: {\em Which abstract separation systems can be represented as separations of graphs?\/} Or as separations of sets such as set bipartitions?

This paper seeks to answer these questions by giving combinatorial characterizations of separation systems of graphs and sets, as well as characterizing those separation systems that come from bipartitions of a set -- an important special case of set separations. Additionally, we give examples of separation systems which are fundamentally different from separation systems of sets or graphs.

The structure of this paper is as follows: in Section~\ref{sec:notation}, we introduce the terms and notation for separation systems used throughout the paper, and make precise what it should mean that a given separation system has the form of set separations. Our main results, characterizing separation systems and universes consisting of separations of a set or bipartitions of a set, are given in Section~\ref{sec:implementations}. Finally, in Section~\ref{sec:graphs}, we treat the case of separations of graphs.

\section{Separation systems}\label{sec:notation}

This paper assumes familiarity with~\cite{AbstractSepSys} and uses the same terms and basic notation. In addition, we shall be using the following terms.

For a separation system $ \vS $ we write $ \Small(\vS) $ for the set of small separations of $ \vS $: those $ \vs\in\vS $ with $ \vs\le\sv $. We call a separation $ \vs $ {\em co-small} if its inverse is small, that is, if $ \vs\ge\sv $.

For a set $ V $ we write $ \S(V) $ for the separation system of all separations of the set $ V $, as defined in~\cite{AbstractSepSys}: the separation system consisting of all (unoriented) separations of the form $ \{A,B\} $, where $ A $ and $ B $ are subsets of $ V $ with $ {A\cup B=V} $, with orientations $ (A,B) $ and $ (B,A) $, where $ (A,B)\le(C,D) $ for oriented separations if and only if $ A\sub C $ and $ D\sub B $. We write $ \U(V) $ for the universe of all separations of the set $ V $: the separation system $ \S(V) $ together with a pairwise supremum~$ \join $ and infimum~$ \meet $ given by
\[ (A,B)\join(C,D)=(A\cup C\,,\,B\cap D) \]
and
\[ (A,B)\meet(C,D)=(A\cap C\,,\,B\cup D) \]
for oriented separations $ (A,B),(C,D)\in\U(V) $.

Furthermore for a set $ V $ we let $ \S\B(V) $ be the separation system of bipartitions of the set $ V $: the sub-system of $ \S(V) $ consisting of all those separations $ (A,B) $ with $ A\cap B=\emptyset $. Similarly, we write $ \U\B(V) $ for the universe of bipartitions of the set $ V $: the sub-universe of $ \U(V) $ containing all separations $ (A,B) $ with $ A $ and $ B $ disjoint. Note that we do not insist that $ A $ and $ B $ be non-empty, so $ \{\emptyset\,,\,V\}\in\U\B(V) $ for all sets $ V $.\\

A map $ f\colon\vS\to\vSdash $ is a {\em homomorphism} of separation systems $ \vS $ and $ \vSdash $ if it commutes with the involutions of $ \vS $ and $ \vSdash $ and is order-preserving. Formally, $ f $ {\em commutes with the involutions} of $ \vS $ and $ \vSdash $ if $ f(\sv)=(f(\vs))^* $ for all $ \vs\in\vS $. The map $ f $ is {\em order-preserving} if $ f(\vr)\le f(\vs) $ whenever $ \vr\le\vs $ for all $ \vr,\vs\in\vS $. An {\em isomorphism} of separation systems $ \vS $ and $ \vSdash $ is a bijective homomorphism $ f\colon\vS\to\vSdash $ whose inverse is also a homomorphism. Two separation systems $ \vS,\vSdash $ are {\em isomorphic} if there is an isomorphism $ f\colon\vS\to\vSdash $.

Similarly, a map $ f\colon\vU\to\vUdash $ is a {\em homomorphism} of universes $ \vU $ and $ \vUdash $ if it commutes with the involutions, joins and meets of $ \vU $ and $ \vUdash $. The map $ f $ {\em commutes with the joins and meets} of $ \vU $ and $ \vUdash $ if $ f(\vr\join\vs)=f(\vs)\join f(\vr) $ and $ f(\vr\meet\vs)=f(\vr)\meet f(\vs) $. An {\em isomorphism} of universes, then, is a bijective homomorphism of universes whose inverse is also a homomorphism. Two universes $ \vU,\vUdash $ are {\em isomorphic} if there is an isomorphism $ f\colon \vU\to\vUdash $. Clearly, every isomorphism of universes is also an isomorphism of separation systems.\\

With the above terms and notation we are now able to formally define what it shall mean that a separation system can be implemented by set separations. Given a separation system $ \vS $, we say that $ \vS $ can be {\em implemented by set separations} if there are a set $ V $ and a sub-system $ \vSdash $ of $ \S(V) $ such that $ \vS $ and~$ \vSdash $ are isomorphic. Similarly, we say that $ \vS $ can be {\em implemented by bipartitions} (of a set) if there are a set $ V $ and a sub-system $ \vSdash $ of $ \S\B(V) $ such that $ \vS $ and~$ \vSdash $ are isomorphic.

If a separation system $ \vS $ can be implemented by set separations or by bipartitions, we call both $ \vSdash $ and the isomorphism $ f\colon\vS\to\vSdash $ witnessing this an {\em implementation} of $ \vS $ by set separations or by bipartitions, respectively.

Finally, for a universe $ \vU $, we say that $ \vU $ can be {\em strongly implemented by set separations} if there are a set $ V $ and a sub-universe $ \vUdash $ of $ \U(V) $ such that~$ \vU $ and~$ \vUdash $ are isomorphic universes. Similarly, we say that~$ \vU $ can be {\em strongly implemented by bipartitions} if there are a set~$ V $ and a sub-universe $ \vUdash $ of $ \U\B(V) $ such that $ \vU $ and~$ \vUdash $ are isomorphic.

If a universe $ \vU $ can be strongly implemented by set separations or by bipartitions, we call both $ \vUdash $ and the isomorphism $ f\colon\vU\to\vUdash $ witnessing this a {\em strong implementation} of $ \vU $ by set separations or by bipartitions, respectively.

Note that, to show that a separation system $ \vS $ can be implemented by set separations or by bipartitions, it suffices to find a ground-set $ V $ and an injective homomorphism~$ f $ from $ \vS $ to $ \S(V) $ or to $ \S\B(V) $ which is an isomorphism between $ \vS $ and its image $ f(\vS) $. In Section~\ref{sec:implementations}, most of the proofs will take this approach.

\section{Set separations and bipartitions of sets}\label{sec:implementations}

In this section we shall characterize those separation systems that can be implemented by separations of sets or by bipartitions of sets. We start with a simple observation regarding the shape of small separations in set separation systems:

\begin{LEM}\label{lem:smalls}
	For any set $ V $, the small separations in $ \S(V) $ and $ \U(V) $ are those of the form $ (A,V) $.
\end{LEM}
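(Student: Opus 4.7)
The plan is to unpack the definitions of ``small'' and of the order on $\U(V)$ and observe that the claim reduces to a one-line set-theoretic equivalence.

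First I would recall that by definition a separation $\vs$ is small iff $\vs \le \sv$. For an oriented set separation $(A,B) \in \U(V)$ the inverse is $(B,A)$, and the order satisfies $(A,B) \le (C,D)$ iff $A \subseteq C$ and $D \subseteq B$. Applying this with $(C,D) = (B,A)$ gives that $(A,B)$ is small iff $A \subseteq B$ and $A \subseteq B$, i.e.\ iff $A \subseteq B$.

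Next I would combine this with the definition of a separation of $V$: we have $A \cup B = V$. If $A \subseteq B$, then $V = A \cup B = B$, so $(A,B) = (A,V)$. Conversely, any $(A,V) \in \U(V)$ trivially satisfies $A \subseteq V$, so $(A,V) \le (V,A)$ and $(A,V)$ is small. This gives the characterization for $\U(V)$, and since $\S(V)$ is just $\U(V)$ with the join and meet forgotten, the same characterization holds there.

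There is no real obstacle here; the statement is essentially an immediate unpacking of the order relation on set separations, and I would present it in a few lines without separate cases.
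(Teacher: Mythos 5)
Your proof is correct and follows essentially the same route as the paper: unpack the order to see that $(A,B)$ is small iff $A \subseteq B$, then use $A \cup B = V$ to conclude $B = V$, with the converse being immediate. The only cosmetic difference is that you explicitly note the two order conditions coincide and that the $\S(V)$ case follows by forgetting joins and meets, both of which the paper leaves implicit.
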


\begin{proof}
	Such separations are clearly small, since $ A\sub V $. On the other hand, if $ (A,B) $ is small then we have $ (A,B)\le(B,A) $ and so $ A\sub B $. But this implies~$ {B=A\cup B=V }$.
\end{proof}

By Lemma~\ref{lem:smalls} the small separations in a separation system of sets with ground-set $ V $ have the following property: for every pair $ (A,V),(B,V)\in\U(V) $ we have $ (A,V)\le(B,V)^*=(V,B) $. We will show below that this property characterizes separation systems of sets, so let us make it formal:

A separation system $ \vS $ is {\em scrupulous} if for every pair $ \vr,\vs\in\vS $ of small separations we have $ \vr\le\sv $.

Using the above observation we can characterize the set separation systems as follows:

\begin{THM}\label{thm:char-sets}
	A separation system $ \vS $ can be implemented by set separations if and only if it is scrupulous.
\end{THM}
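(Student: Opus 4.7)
For the forward direction, suppose $\vS$ is implemented as a sub-system of $\S(V)$. By Lemma~\ref{lem:smalls}, every small separation in $\S(V)$ has the form $(A,V)$, and any two such $(A,V)$ and $(B,V)$ satisfy $(A,V)\le(V,B)=(B,V)^*$ because $A\sub V$ and $B\sub V$ trivially. Thus $\S(V)$ is scrupulous; scrupulousness is inherited by sub-systems and preserved under isomorphism, so $\vS$ is scrupulous.

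For the backward direction, assume $\vS$ is scrupulous. Let $V$ be the set of all \emph{up-closed orientations} of $\vS$: subsets $O\sub\vS$ that are up-closed ($\vr\in O$ and $\vr\le\vs$ imply $\vs\in O$) and contain at least one of $\vs,\sv$ for every $\vs\in\vS$. Define $f\colon\vS\to\S(V)$ by
\[ f(\vs) := \big(\,\{O\in V:\vs\in O\}\,,\;\{O\in V:\sv\in O\}\,\big). \]
The union of these two sets equals $V$ by the orientation property; involution is preserved by construction; and up-closure gives order-preservation (if $\vr\le\vs$ then every orientation containing $\vr$ contains $\vs$, handling the first components, and applying up-closure to $\sv\le\rv$ handles the second). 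Injectivity of $f$ will follow from faithfulness combined with antisymmetry of $\le$.

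The heart of the proof, and the only place scrupulousness is used, is to verify faithfulness: for each $\vr\not\le\vs$ I need an up-closed orientation $O$ with $\vr\in O,\vs\notin O$, or with $\sv\in O,\rv\notin O$. If $\vs$ is not co-small, I would take
\[ O := \{\vt\in\vS : \vr\le\vt \text{ or } \vt\not\le\vs\}. \]
This contains $\vr$, excludes $\vs$, and a direct verification using transitivity shows it is up-closed. The essential check is that $O$ is an orientation: if neither $\vt$ nor $\tv$ lay in $O$ we would have $\vt\le\vs$ and $\tv\le\vs$, and since $\tv\le\vs$ is equivalent to $\sv\le\vt$, we would conclude $\sv\le\vt\le\vs$ and hence that $\vs$ is co-small, contradicting the case assumption.

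If $\vs$ is co-small then $\sv$ is small, and here scrupulousness enters: were $\vr$ also small, scrupulousness would give $\vr\le\sv^*=\vs$, contradicting $\vr\not\le\vs$. Thus $\vr$ is not small, i.e.\ $\rv$ is not co-small, and I would apply the previous construction to the pair $(\sv,\rv)$---noting that $\sv\not\le\rv$ is equivalent to $\vr\not\le\vs$---to produce an up-closed orientation with $\sv\in O$ and $\rv\notin O$. The main obstacle is precisely this co-small case, in which no up-closed orientation of $\vS$ can exclude $\vs$ itself; scrupulousness is the exact property that lets us detour through $(\sv,\rv)$ and obtain the witness from the second components instead.
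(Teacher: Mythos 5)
Your proof is correct, and it takes a genuinely different route from the paper's. The paper uses a very economical ground set: $V$ is taken to be the set of all non-co-small separations of $\vS$ itself, and each $\vs$ is mapped to $(A_\vs,A_\sv)$ with $A_\vs=\{\vx\in V:\vx\not\ge\vs\}$. You instead take $V$ to be the set of up-closed ``orientations'' of $\vS$ (up-closed subsets meeting every pair $\{\vt,\tv\}$), which lives inside the power set of $\vS$ rather than inside $\vS$. The two constructions are in fact related: for each non-co-small $\vx$ the set $O_\vx=\{\vt:\vt\not\le\vx\}$ is one of your up-closed orientations, and under this identification the paper's map becomes the restriction of yours to the ``principal'' orientations $O_\vx$. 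Your extra generosity in the ground set is exactly what makes the faithfulness witness $O=\{\vt:\vr\le\vt\text{ or }\vt\not\le\vs\}=\ucl(\vr)\cup O_\vs$ available; the paper, having only the elements $\vx\in\vS$ at its disposal, instead argues directly that $\vt\notin A_\vs$ (resp.\ $\sv\notin A_\tv$) forces $\vs\le\vt$ when $\vt$ (resp.\ $\sv$) is non-co-small. Both proofs invoke scrupulousness to dispose of the corner case where both sides of the pair are small/co-small: the paper uses it to conclude $\vs\le\vt$ outright when $\vs$ and $\tv$ are both small, while you use it to rule out the configuration ($\vs$ co-small, $\vr$ small) in which neither your primary construction nor its dual would apply. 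Your approach is closer in spirit to the paper's proof of Theorem~\ref{thm:char-sets-universe} and to Theorem~\ref{thm:regular-bips} from~\cite{TreeSets}, where the ground set is also built from orientation-like subsets; the paper's proof of Theorem~\ref{thm:char-sets} buys a smaller ground set and avoids Zorn-type maximality arguments entirely (you don't need them either, since you write down the witnessing orientation explicitly).
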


\begin{proof}
	First we check that $\S(V)$ is scrupulous for any set $V$, from which it follows that any subsystem is scrupulous and so that any $\vS$ which can be implemented by set separations is scrupulous. Let $(A, V)$ and $(A', V)$ be small separations of $\S(V)$. Then $A \subseteq V$ and $A' \subseteq V$, so $(A, V) \leq (V, A')$.
	
	Now suppose that $\vS$ is scrupulous. Let $V$ be the set of all non-co-small elements of $ \vS $. For any $\vs \in \vS$ let
	\[A_\vs := \{\vx \in V \mid \vx \not \geq \vs\}\]
	and
	\[i(\vs) := (A_\vs, A_\sv).\]
	There can't be any $\vx \in V \sm (A_\vs \cup A_\sv)$, since then we would have both $\vx \geq \vs$ and $\vx \geq \sv$, so that $\xv \leq \vs \leq \vx$, which contradicts the fact that~$\vx\in V$ isn't co-small. Thus $i(\vs) \in \S(V)$ for any $\vs \in \vS$. We shall show that $i$ is an implementation of $\vS$ by set separations. It is clear from the definition that $i$ is a homomorphism of separation systems, so it remains to check that it is an isomorphism onto its image. That is, we must show that $i(\vs) \leq i(\vt)$ implies that $\vs \leq \vt$.
	
	So suppose that $i(\vs) \leq i(\vt)$, that is, $ A_\vs\sub A_\vt $ and $ A_\tv\sub A_\sv $. As $\vt\not \in A_\vt$ we have $\vt \not \in A_\vs$. If $\tv$ is not small then $\vt \in V$ and it follows that $\vs \leq \vt$. Similarly, $\sv \not \in A_\sv$ and thus $\sv \not \in A_\tv$, so if $\vs$ is not small then $\sv \in V$ and hence $\vs \leq \vt$. But we also have $\vs \leq \vt$ in the remaining case that  $\vs$ and~$\tv$ are both small, because $\vS$ is scrupulous.
\end{proof}

Not every separation system is scrupulous, as the next example shows:

\begin{EX}\label{ex:separations-sets}
	Let $ \vS $ be the separation system consisting of the separations $ \{\vr,\rv\} $ and $ \{\vs,\sv\} $, with the relations $ \vr\le\rv $ as well as $ \vs\le\sv $ and no further (non-reflexive) relations. Then $ \vr $ and $ \vs $ are small separations with $ \vr\not\le\sv $, so~$ \vS $ is not scrupulous and hence cannot be implemented by set separations.
\end{EX}

Example~\ref{ex:separations-sets} demonstrates how any separation system can be modified so as to not have an implementation by set separations: given a scrupulous separation system $ \vSdash $, one can make this system non-scrupulous by adding a copy of $ \vS $ from Example~\ref{ex:separations-sets} to $ \vSdash $, where separations from $ \vSdash $ are incomparable to those from the copy of $ \vS $. The resulting larger separation system will be non-scrupulous and hence have no implementation by set separations.

However, modifying universes of separations to make them non-scrupulous is not as straightforward as for separation systems due to the existence of joins and meets of any two separations. For universes, being scrupulous is equivalent to another condition on the structure of the small separations:

\begin{LEM}\label{lem:scrup}
	Let $ \vU $ be a universe. Then the following are equivalent:
	\begin{enumerate}
		\item[\tn{(i)}] $ \vU $ is scrupulous, i.e. $ \vs\le\tv $ for all small $ \vs,\vt\in\vU $;
		\item[\tn{(ii)}] $ (\vs\join\vt) $ is small for all small $ \vs,\vt\in\vU $;
		\item[\tn{(iii)}] $ (\vs\meet\vt) $ is co-small for all co-small $ \vs,\vt\in\vU $.
	\end{enumerate}
\end{LEM}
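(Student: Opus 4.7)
The plan is to prove (i) $\Leftrightarrow$ (ii) directly, then obtain (ii) $\Leftrightarrow$ (iii) by taking inverses.

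For (i) $\Rightarrow$ (ii), I would take small $\vs,\vt \in \vU$ and use that $\vs\join\vt$ is small iff $\vs\join\vt \le (\vs\join\vt)^* = \sv\meet\tv$, so it suffices to show $\vs\join\vt \le \sv$ and $\vs\join\vt \le \tv$. The first follows from $\vs\le\sv$ (smallness of $\vs$) together with $\vt\le\sv$ (given by (i) applied to the small pair $\vt,\vs$); the second is symmetric.

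For (ii) $\Rightarrow$ (i), I would note that for any small $\vs,\vt$ the separation $\vs\join\vt$ is small by (ii), which unfolds to $\vs\join\vt \le \sv\meet\tv$. Then $\vs \le \vs\join\vt \le \sv\meet\tv \le \tv$ gives (i).

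Finally, (ii) $\Leftrightarrow$ (iii) is essentially dualisation: taking inverses swaps joins and meets, so $(\vs\meet\vt)^* = \sv\join\tv$, and $\vs,\vt$ are co-small precisely when $\sv,\tv$ are small. Hence (iii) rewritten for the inverse separations is exactly (ii).

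No step looks like a genuine obstacle; the only thing to watch is keeping the direction of the inequalities straight when expanding the definitions of small ($\vs\le\sv$) and co-small ($\vs\ge\sv$), and remembering that applying the involution reverses order and swaps $\join$ with $\meet$.
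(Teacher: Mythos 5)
Your proposal is correct and follows essentially the same route as the paper: both directions of (i) $\Leftrightarrow$ (ii) use the chain $\vs \le \vs\join\vt \le (\vs\join\vt)^* = \sv\meet\tv \le \tv$, and (ii) $\Leftrightarrow$ (iii) is the same De Morgan duality. The only cosmetic difference is that in (i) $\Rightarrow$ (ii) you bound $\vs\join\vt$ from above by each of $\sv,\tv$ before taking the meet, whereas the paper bounds each of $\vs,\vt$ by $\sv\meet\tv$ before taking the join; these are equivalent.
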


\begin{proof}
	To see that (i) implies (ii), let $ \vs,\vt\in\vU $ be two small separations with $ \vs\le\tv $ and $ \vt\le\sv $. As $ \vs $ is small we have $ \vs\le\sv $, so $ \vs\le(\sv\meet\tv) $. Similarly we have $ \vt\le\tv $ by assumption and hence $ \vt\le(\sv\meet\tv) $. But this implies $ (\vs\join\vt)\le(\sv\meet\tv)=(\vs\join\vt)^* $ and hence (ii).
	
	To see that, conversely, (ii) implies (i), let $ \vs,\vt\in\vU $ be two small separations for which $ (\vs\join\vt) $ is small. Then
	\[ \vs\le(\vs\join\vt)\le(\vs\join\vt)^*=(\sv\meet\tv)\le\tv. \]
	
	Finally, for the equivalence of (ii) and (iii), note that for all $ \vs,\vt\in\vU $ we have $ (\vs\join\vt)^*=(\sv\meet\tv) $ by De Morgan's law, which immediately implies the desired equivalence.
\end{proof}

Typically, the second condition in Lemma~\ref{lem:scrup} is slightly easier to work with than the first, and we shall use it in our proof of Theorem~\ref{thm:char-sets}'s analogue for universes.

To prove a characterization of universes which can be strongly implemented by set separations we shall need the following technical lemma, which is more about lattices than about separation systems:

\begin{LEM}\label{lem:dist}
	Let $ L $ be a distributive lattice and let $ x,s $ and $ t $ be elements of $ L $ with $ s\meet x\le t\meet x $ and $ s\join x\le t\join x $. Then $ s\le t $.
\end{LEM}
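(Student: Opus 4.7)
The plan is to prove $s \le t$ by a single chain of inequalities, using in turn absorption, monotonicity from the second hypothesis, distributivity of $L$, monotonicity from the first hypothesis, and finally absorption again. Concretely, I would start from $s = s \meet (s \join x)$, which holds by absorption, then replace $s \join x$ by the larger $t \join x$ to get $s \le s \meet (t \join x)$. At this point distributivity yields $s \meet (t \join x) = (s \meet t) \join (s \meet x)$, which is the key step where I convert a meet of $s$ with a join into a join I can bound termwise.

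From here each summand is dominated by $t$: $s \meet t \le t$ trivially, while $s \meet x \le t \meet x \le t$ by the first hypothesis and absorption. Hence $(s \meet t) \join (s \meet x) \le t$, closing the chain.

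The whole argument is essentially one display:
\[
s = s \meet (s \join x) \le s \meet (t \join x) = (s \meet t) \join (s \meet x) \le t \join (t \meet x) = t,
\]
where the first equality is absorption, the first inequality uses $s \join x \le t \join x$, the middle equality is distributivity, the second inequality uses $s \meet t \le t$ together with $s \meet x \le t \meet x$, and the final equality is absorption.

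There is no real obstacle here; the only thing to watch is picking the right absorption identity to start with (namely $s = s \meet (s \join x)$ rather than $s = s \join (s \meet x)$), since that is what allows the second hypothesis $s \join x \le t \join x$ to be used before invoking distributivity. A symmetric argument starting from $s = s \join (s \meet x)$ and using the first hypothesis first would also work, by De Morgan–style duality in the lattice.
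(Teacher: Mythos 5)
Your proof is correct and follows essentially the same approach as the paper's: a short chain of (in)equalities built from absorption, distributivity, and monotonicity of $\meet$ and $\join$. Your chain is in fact slightly more economical, applying distributivity once and then bounding the two summands $s\meet t$ and $s\meet x$ directly by $t$, whereas the paper starts from the dual absorption $s=s\join(s\meet x)$ and applies distributivity twice, passing through the intermediate form $(t\join s)\meet(t\join x)=t\join(s\meet x)$ before concluding.
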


\begin{proof}
	By elementary calculations we have
	\begin{align*}
	s&=s\join(s\meet x)\\
	&\le s\join(t\meet x)\\
	&=(s\join t)\meet(s\join x)\\
	&\le(t\join s)\meet(t\join x)\\
	&=t\join(s\meet x)\\
	&\le t\join(t\meet x)\\
	&=t\,.
	\end{align*}
\end{proof}

We are now ready to prove an analogue of Theorem~\ref{thm:char-sets} for universes. As every strong implementation of a universe $ \vU $ by set separations is also an implementation of $ \vU $, viewed as a separation system without joins and meets, every universe which can be strongly implemented using separations of sets must be scrupulous by Theorem~\ref{thm:char-sets}. However, being scrupulous is not a sufficient condition for a universe to have a strong implementation by set separations: for every set $ V $, the universe $ \U(V) $ as well as all sub-universes of it are (easily seen to be) distributive since intersections and unions of sets are distributive.

So, given a distributive and scrupulous universe $ \vU $, how can we find a strong implementation of $ \vU $? Let us first suppose that $ \vU $ already is a sub-universe of~$ \U(V) $ for some set $ V $, and see whether we can describe $ V $ and each $ (A,B)\in\vU $ just in terms of $ \vU $ itself, without making use of $ V $.

To this end, for each $ v\in V $ let $ A_v $ be the set of all $ (A,B)\in\vU $ with $ v\in A $, and let $ V' $ be the set of all those $ A_v $. Then we can write any separation $ (A,B)\in\vU $ as
\[(A,B)=\braces{\{v\in V\mid (A,B)\in A_v\},\{v\in V\mid (B,A)\in A_v\}}.\]
Thus we can define a map $ f\colon\vU\to\U(V') $ as
\[(A,B)\mapsto(\{A_v\in V'\mid(A,B)\in A_v\}\,,\,\{A_v\in V'\mid(B,A)\in A_v\}),\]
and it is easy to check that the map $ f $ is an isomorphism of universes between~$ \vU $ and its image in $ \U(V') $. Thus, the ground-set $ V' $ we defined can be used to obtain a strong implementation of $ \vU $ by separations of sets.

In order to mimic this approach in the general case where we do not know already that $ \vU $ is a sub-universe of some $ \U(V) $, we need to find a collection $ V' $ of sets $ X\sub\vU $ where the sets $ X\in V' $ behave similarly to the sets $ A_v $ above. To do this, we shall find some combinatorial properties of the sets $ A_v $, and then take $ V' $ as the set of all $ X\sub\vU $ which have those combinatorial properties.

In the scenario above where $ \vU $ is a sub-universe of some $ \U(V) $, the first notable property of a set $ A_v=\{(A,B)\in\vU\mid v\in A\} $ for some $ v\in V $ is that $ A_v $ is up-closed: if $ (A,B)\in A_v $ and $ (C,D)\ge(A,B) $, then $ v\in A $ and $ A\sub C $, hence $ v\in C $ and $ (C,D)\in A_v $. Furthermore $ A_v $ is closed under taking meets: if $ (A,B),(C,D)\in A_v $, then $ v\in A $ and $ v\in C $, so $ v\in A\cap C $ and hence
\[(A,B)\meet(C,D)=(A\cap C\,,\,B\cup D)\in A_v.\]
Similarly, we get that the complement of $ A_v $ in $ \vU $ is down-closed and closed under taking joins. Finally, we can say something about the relationship between $ A_v $ and the small separations of $ \vU $: namely, that $ A_v $ contains the inverse of every small separation of $ \vU $. This is because the small separations of $ \vU $ have the form $ (A,V) $, so $ (V,A)\in A_v $ for all of them.

By taking $ V' $ as the set of all $ X\sub\vU $ which have the five properties from the last paragraph, we can prove Theorem~\ref{thm:char-sets-universe}:

\begin{THM}\label{thm:char-sets-universe}
	A universe of separations $ \vU$ can be strongly implemented by set separations if and only if it is distributive and scrupulous.
\end{THM}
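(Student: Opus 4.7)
The forward direction is immediate: the universe $\U(V)$ is distributive because $\cap$ and $\cup$ are, and distributivity is inherited by sub-universes; scrupulousness of sub-universes of $\U(V)$ follows from Theorem~\ref{thm:char-sets} because any sub-universe is in particular a sub-system of $\S(V)$. So any $\vU$ strongly implementable by set separations is distributive and scrupulous.

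For the converse, suppose $\vU$ is distributive and scrupulous, and let $C$ be the set of co-small separations of $\vU$. Then $C$ is up-closed, and closed under meets by Lemma~\ref{lem:scrup}(iii), so $C$ is a filter of the underlying lattice. Following the discussion preceding the theorem, I would take $V'$ to be the set of all prime filters $X$ of $\vU$ with $X \supseteq C$ --- equivalently, the subsets of $\vU$ satisfying the five closure properties highlighted above. Define $i \colon \vU \to \U(V')$ by $i(\vs) := (A_\vs, A_\sv)$ where $A_\vs := \{X \in V' : \vs \in X\}$. The image does lie in $\U(V')$ because $\vs \join \sv$ is always co-small (its inverse $\sv \meet \vs$ being small) and therefore lies in every $X \in V'$, whence by primeness $\vs \in X$ or $\sv \in X$ and thus $A_\vs \cup A_\sv = V'$. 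Commutation of $i$ with the involution is immediate; commutation with joins and meets follows routinely from the filter property together with primeness of the complement of each $X \in V'$.

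The main obstacle is showing $i$ is order-reflecting: $i(\vs) \leq i(\vt) \Rightarrow \vs \leq \vt$. I would argue by contradiction, assuming $\vs \not\leq \vt$ while $A_\vs \sub A_\vt$ and $A_\tv \sub A_\sv$. Let $F_1, F_2$ denote the filters of $\vU$ generated by $C \cup \{\vs\}$ and $C \cup \{\tv\}$. If $\vt \notin F_1$, the Stone prime filter theorem for distributive lattices extends $F_1$ to a prime filter $X$ disjoint from the principal ideal $\downarrow\!\vt$, yielding $X \in V'$ with $\vs \in X$ and $\vt \notin X$, contradicting $A_\vs \sub A_\vt$; dually if $\sv \notin F_2$. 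So $\vt \in F_1$ and $\sv \in F_2$, which (since $C$ is a filter closed under meets) produces $c_1, c_2 \in C$ with $\vt \geq \vs \meet c_1$ and $\sv \geq \tv \meet c_2$ (taking $c_1 := \vt$ or $c_2 := \sv$ in the degenerate subcase that $\vt$ or $\sv$ itself lies in $C$). Set $c := c_1 \meet c_2 \in C$; by monotonicity $\vs \meet c \leq \vt$ and $\tv \meet c \leq \sv$. Applying the involution to the latter gives $\vs \leq \vt \join c^*$, and since $c \geq c^*$ (as $c$ is co-small) we further get $\vs \leq \vt \join c$. Meeting the first inequality with $c$ and joining the second with $c$ yields
\[
\vs \meet c \leq \vt \meet c \qquad\text{and}\qquad \vs \join c \leq \vt \join c,
\]
so Lemma~\ref{lem:dist} applied with $x := c$ forces $\vs \leq \vt$, the desired contradiction. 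The work in this last paragraph --- combining a Stone-type prime filter extension with Lemma~\ref{lem:dist} to bridge the two one-sided separation conditions --- is the technical heart of the argument.
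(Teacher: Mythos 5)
Your proof is correct and follows the same overall strategy as the paper: both take the ground set to be the subsets $X\subseteq\vU$ satisfying the five closure conditions (equivalently, prime filters of $\vU$ extending the filter $C$ of co-small elements, up to the improper filter $X=\vU$), define $i(\vs)=(A_\vs,A_\sv)$ in the same way, and prove order-reflection by splitting on whether co-small $c_1,c_2$ with $\vs\meet c_1\le\vt$ and $\tv\meet c_2\le\sv$ both exist. The one substantive difference is how the separating ground-set element is produced when (say) $c_1$ does not exist: you package this as $\vt\notin F_1$ and invoke the Stone--Birkhoff prime filter separation theorem for distributive lattices, whereas the paper runs a bespoke Zorn's-lemma argument on ``good pairs'' and uses Lemma~\ref{lem:dist} to show that a maximal good pair already has $Y=\vU\sm X$, in effect reproving the needed instance of Stone's theorem in situ with the co-small constraint built in. Your route is more conceptual and makes the construction visibly a Stone-type representation at the cost of importing a standard lattice-theoretic black box, while the paper's version is self-contained and uses Lemma~\ref{lem:dist} as its only external lattice tool. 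The concluding contradiction via $c:=c_1\meet c_2$, the inference $\vs\le\vt\join c$ from $c\ge\cv$, and the application of Lemma~\ref{lem:dist} is exactly the paper's Claim~3.
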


\begin{proof}
	If $\vU$ has a strong implementation then it is scrupulous by Lemma~\ref{thm:char-sets} and distributive because $\U(V)$ is distributive for every $V$.
	
	Now suppose that $ \vU $ is distributive and scrupulous. Let $ V $ be the set of all~$ X\sub \vU $ such that
	\begin{enumerate}[$ (1) $]
		\item $ X $ is up-closed in $ \vU $ and closed under taking meets;
		\item $ \vU\sm X $ is down-closed in $ \vU $ and closed under taking joins;
		\item $ X $ contains all co-small elements of $ \vU $.
	\end{enumerate}
	For any $ \vs\in\vU $ we have $ (\vs\join\sv)^*=\sv\meet\vs\le\vs\join\sv $, so $ \vs\join\sv $ is co-small for all $ \vs\in\vU $. Given $ X\in V $ and $ \vs\in\vU $ we thus have $ (\vs\join\sv)\in X $. Therefore $ X $ must contain at least one of $ \vs $ and $ \sv $, as we cannot have $ \vs,\sv\in\vU\sm X $ by~$ (2) $.
	
	For any $ \vs\in\vU $ let $ A_\vs:=\{X\in V\mid \vs\in X\} $ and $ f(\vs):=(A_\vs,A_\sv) $. By the above argument we have $ A_\vs\cup A_\sv=V $, so $ f $ takes its image in $ \U(V) $. This map clearly commutes with the involution, and by~$ (1) $ and $ (2) $ it also commutes with $ \meet $ and~$ \join $. It remains to show that $ f$ is injective. For this let $ \vs,\vt\in\vU $ with $ \vs\ne\vt $ be given; we shall show that $ f(\vs)\ne f(\vt) $. By switching their roles if necessary we may assume that~$ \vs\not\le\vt $. \\
	\\
	{\bf Claim 1:} If there is no co-small $ \vx_1\in\vU $ such that $ \vs\meet\vx_1\le\vt $ then $ A_\vs\not\sub A_\vt $.\\
	
	To see this, we wish to find a pair $ (X,Y) $ of disjoint subsets of $ \vU $ such that
	\begin{enumerate}[(I)]
		\item $ \vs\in X $ and $ X $ is up-closed in $ \vU $ and closed under taking meets;
		\item $ \vt\in Y $ and $ Y $ is down-closed in $ \vU $ and closed under taking joins;
		\item $ X $ contains all co-small elements of $ \vU $.
	\end{enumerate}
	Call such a pair $ (X,Y) $ {\it good}. We will show later that a maximal good pair $ (X,Y) $ will then have $ X\in V $ and hence witness that $ {A_\vs\not\sub A_\vt} $; let us show first that some good pair exists. To see this, let
	\[A:=\bigcup_{\xv\in\Small(\vU)}\ucl (\vs\meet\vx)\]
	and $ B:=\dcl (\vt) $. Then $ A $ satisfies (I) by Lemma~\ref{lem:scrup} and (III) by construction, and~$ B $ clearly satisfies (II). Thus $ (A,B) $ is a good pair provided $ A $ and $ B $ are disjoint. Suppose they are not disjoint; then $ \vt\in A $ and hence there is some co-small $ \vx_1\in\vU $ with $ \vt\in\ucl (\vs\meet\vx_1) $, which contradicts the premise of Claim~1.
	
	Now let $ (X,Y) $ be an inclusion-wise maximal good pair, which exists by Zorn's Lemma. We wish to show $ Y=\vU\sm X $ as that would imply $ X\in V $ and in particular $ X\in A_\vs\sm A_\vt $. Suppose there exists $ \vr\in\vU\sm (X\cup Y) $. By the maximality of $ X $ there are $ \vx_1\in X $ and $ \vy_1\in Y $ with $ \vr\meet\vx_1\le\vy_1 $, and by the maximality of $ Y $ there are $ \vx_2\in X $ and $ \vy_2\in Y $ with $ \vx_2\le\vr\join\vy_2 $. Set $ \vx:=\vx_1\meet\vx_2 $ and $ \vy:=\vy_1\join\vy_2 $. Then $ \vx\in X $ and $ \vy\in Y $ with $ \vx\meet\vr\le\vy\meet\vr $ and $ \vx\join\vr\le\vy\join\vr $. Lemma~\ref{lem:dist} now implies $ \vx\le\vy $, but this contradicts the fact that $ X $ is up-closed and disjoint from $ Y $. Therefore $ Y=\vU\sm X $ and $ X\in V $, which proves the claim.\\
	\\
	{\bf Claim 2:} If there is no co-small $ \vx_2\in\vU $ such that $ \tv\meet\vx_2\le\sv $ then $ A_\tv\not\sub A_\sv $.\\
	\\
	As $ \vs\not\le\vt $ is equivalent to $ \tv\not\le\sv $ Claim~2 follows in exactly the same way as Claim~1.\\
	\\
	{\bf Claim 3:} There cannot be co-small $ \vx_1,\vx_2\in\vU $ such that
	\begin{align*}
	\vs\meet\vx_1&\le\vt,\\
	\tv\meet\vx_2&\le\sv.
	\end{align*}
	To see this, suppose $ \vx_1,\vx_2 $ are as in the claim. Let $ \vx:=\vx_1\meet\vx_2 $; this is a co-small separation by Lemma~\ref{lem:scrup}. We then have $ \vs\meet\vx\le\vt $ and $ \tv\meet\vx\le\sv $. Applying the involution to the latter inequality gives $ \vs\le\vt\join\xv\le\vt\join\vx $. Therefore we get that $ \vs\meet\vx\le\vt\meet\vx $ as well as $ \vs\join\vx\le\vt\join\vx $, which by Lemma~\ref{lem:dist} contradicts the assumption that $ \vs\not\le\vt $. This proves the claim.\\
	\\
	From Claim~3 it follows that the assumption of at least one of Claim~1 or Claim~2 must be satisfied, and hence $ f(\vs)\ne f(\vt) $, which completes the proof.
\end{proof}

The next example shows that the assumption of distributivity in Theorem~\ref{thm:char-sets-universe} is indeed necessary, as there are abstract universes of separations which are not distributive:

\begin{EX}\label{ex:universe-sets}
	Let $ L $ be an arbitrary non-distributive lattice. Let $ \vU $ be the separation system consisting of an unoriented separation $ \{\vr,\rv\} $ for each $ {r\in L} $, with the following relations: $ \vr\le\vs $ and $ \sv\le\rv $ if and only if $ r\le s $ in $ L $, and additionally $ \vr\le\sv $ for all $ r,s\in L $. As $ L $ is a lattice every pair of separations in~$ \vU $ has a meet and a join, so $ \vU $ is a universe. As $ L $ is non-distributive,~$ \vU $ is non-distributive by construction, too. Therefore there can be no strong implementation of $ \vU $ by set separations. Furthermore $ \vU $ is scrupulous, showing that the distributivity cannot be omitted from Theorem~\ref{thm:char-bips-universe}.
\end{EX}

Let us now turn to the topic of (strong) implementations by bipartitions. From Lemma~\ref{lem:smalls} it follows that the only small separation of $ \S\B(V) $ is $ (\emptyset,V) $. This separation is not only small, it is also the least element of $ \S\B(V) $. So let us call a separation system $ \vS $ {\em fastidious} if we have $ \vs\le\vt $ for all small $ \vs\in\vS $ and all $ \vt\in\vS $. Clearly every fastidious separation system has at most one small separation, and every separation system with an implementation by bipartitions of sets must be fastidious. Furthermore, every fastidious separation system is scrupulous.

Somewhat surprisingly, Theorem~\ref{thm:char-sets-universe} directly implies that every distributive and fastidious universe has a strong implementation by bipartitions of sets:

\begin{THM}\label{thm:char-bips-universe}
	A universe of separations $ \vU$ can be strongly implemented by bipartitions of sets if and only if it is distributive and fastidious.
\end{THM}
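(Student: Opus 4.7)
The plan is to treat the two directions separately. For the forward direction, suppose $\vU$ is isomorphic to a sub-universe of $\U\B(V)$ for some set $V$. Distributivity is inherited from $\U(V)$, as already noted just before Theorem~\ref{thm:char-sets-universe}. For fastidiousness, Lemma~\ref{lem:smalls} tells us that the small elements of $\U(V)$ are precisely those of the form $(A,V)$; intersecting this with $\U\B(V)$ leaves only $(\emptyset,V)$, which lies below every bipartition of $V$. Hence $\U\B(V)$ is fastidious, and any sub-universe inherits this property.

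For the converse, the strategy is to start from a strong implementation $f\colon\vU\to\U(V')$ supplied by Theorem~\ref{thm:char-sets-universe}---which applies because every fastidious universe is scrupulous---and then quotient away a common overlap to convert it into a bipartition implementation. Assume $\vU$ is non-empty (the empty case being trivial). For any $\vs\in\vU$ the element $\vs\meet\sv$ is small, and fastidiousness forces all small separations of $\vU$ to coincide, so $\vU$ has a unique smallest element, which I shall denote by $\bot$. By Lemma~\ref{lem:smalls} we may write $f(\bot)=(C,V')$ for some $C\sub V'$. Writing $f(\vs)=(A_\vs,B_\vs)$ and using that $f$ commutes with meets,
\[(A_\vs\cap B_\vs,\,V')=f(\vs)\meet f(\sv)=f(\bot)=(C,V'),\]
so $A_\vs\cap B_\vs=C$ for \emph{every} $\vs\in\vU$; and $\bot\le\vs$ combined with the order-preservation of $f$ yields $C\sub A_\vs$ for all $\vs$.

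The final step is to set $V:=V'\sm C$ and define $g\colon\vU\to\U\B(V)$ by $g(\vs):=(A_\vs\sm C,\,B_\vs\sm C)$. The two coordinates of $g(\vs)$ are then disjoint, with union $V'\sm C=V$, so each $g(\vs)$ does lie in $\U\B(V)$. Compatibility of $g$ with involutions, meets, and joins is immediate from the corresponding properties of $f$, and injectivity follows because $g(\vs)=g(\vt)$ combined with $C\sub A_\vs\cap A_\vt$ recovers $A_\vs=A_\vt$ and similarly $B_\vs=B_\vt$, so $f(\vs)=f(\vt)$ and hence $\vs=\vt$ by injectivity of $f$. The only genuinely new observation beyond Theorem~\ref{thm:char-sets-universe} is that the intersection $A_\vs\cap B_\vs$ is the \emph{same} set $C$ for every $\vs$; this uniformity is precisely what allows a single clean removal of $C$ from the ground set to turn the set-separation implementation into one by bipartitions, and everything else is routine bookkeeping.
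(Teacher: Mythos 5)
Your proof is correct and follows essentially the same route as the paper: obtain a strong implementation by set separations via Theorem~\ref{thm:char-sets-universe}, observe that the overlap $A_\vs\cap B_\vs$ is the same set for every $\vs\in\vU$, and strip that common overlap from the ground set to land in $\U\B(V)$. The only cosmetic difference is that you identify this common overlap as the left side of the image of the unique small (least) element via $f(\vs)\meet f(\sv)$, whereas the paper uses the image of the unique co-small (greatest) element via $f(\vs)\join f(\sv)$---dual formulations of the same observation.
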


\begin{proof}
	Since the only small bipartition of a set $ V $ is $ (\emptyset,V) $, any separation system implemented by bipartitions is fastidious. Now suppose that $ \vU $ is a distributive and fastidious universe. Then $ \vU $ is scrupulous, so by Theorem~\ref{thm:char-sets-universe} there is a strong implementation $ f\colon\vU\to\vUdash $ of $ \vU $ by set separations, where $ \vUdash $ is a sub-universe of $ \U(V) $ for some set $ V $. As $ (\vs\join\sv) $ is co-small for any $ \vs\in\vU $ there exists a co-small separation $ \vr\in\vU $. Since~$ \vU $ is fastidious this $ \vr $ must be the greatest element of $ \vU $. In particular $ \vr $ is the unique co-small element of $ \vU $ and we have $ \vs\join\sv=\vr $ for all $ \vs\in\vU $. By Lemma~\ref{lem:smalls}, the image of $ \vr $ under $ f $ is of the form $ f(\vr)=(V,X) $ for some $ X\sub V $. Given any $ \vs\in\vU $ and $ (A,B):=f(\vs) $ we thus have $ (A,B)\join(B,A)=(V,X) $, and in particular $ A\cap B=X $.
	
	Consider the map $ g\colon \vUdash\to\U(V\sm X) $ defined by
	\[g(A,B):=(A\sm X\,,\,B\sm X).\]
	As $ X=A\cap B $ for all $ (A,B)\in f(\vU) $ the map $ g $ is a strong implementation of~$ \vUdash $ by bipartitions of $ V\sm X $. The map $ h\colon\vU\to\U(V\sm X) $ defined by $ h=g\circ f $ is thus a strong implementation of $ \vU $ by bipartitions of $ (V\sm X) $.
\end{proof}

The next example shows that the assumption of distributivity in Theorem~\ref{thm:char-bips-universe} cannot be omitted, as there are fastidious universes which are not distributive:

\begin{EX}\label{ex:universe-bipartitions}
	Let $ \vU $ be the universe consisting of three unoriented separations $ \{\vr,\rv\},\{\vs,\sv\} $ and $ \{\vt,\tv\} $ with the relations
	\[\vr\le\vs\le\vt\le\rv\qquad\qquad\tn{and}\qquad\qquad\vr\le\tv\le\sv\le\rv.\]
	Then $ \vU $ is a fastidious universe with $ \vr $ as the least element, but we have
	\[\vt=(\vs\join\sv)\meet\vt\ne(\vs\meet\vt)\join(\sv\meet\vt)=\vs,\]
	so $ \vU $ is not distributive, showing that the assumption of distributivity in Theorem~\ref{thm:char-bips-universe} is necessary.
\end{EX}

Example~\ref{ex:universe-bipartitions} is a modification of the pentagon lattice $ N_5 $, which is an elementary example of a non-distributive lattice. The other prototypical non-distributive lattice, the diamond lattice $ M_3 $, can be turned into an example of a fastidious non-distributive universe in a similar fashion.%
\COMMENT{
	The way to do that is to have three unoriented separations $ \{\vr,\rv\},\{\vs,\sv\} $ and $ \{\vt,\tv\} $ where $ \vr $ is the least and $ \rv $ the greatest element, with no further relations.
}

The power of the theory of universes of bipartitions can be seen in~\cite{ProfilesNew}, where it is applied to obtain an neat proof of the existence of Gomory-Hu trees in finite graphs.

Interestingly, the conclusion of Theorem~\ref{thm:char-sets} that every scrupulous separation system can be implemented by sets does not directly imply that every fastidious separation system has an implementation by bipartitions, even though the analogous implication is true for universes as seen in the proof of Theorem~\ref{thm:char-bips-universe}. The next example illustrates this:

\begin{EX}\label{ex:seps-no-implication}
	Let $ V $ be the three-element set $ \{x,y,z\} $ and $ \vS $ the separation system containing the three unoriented set separations
	\[\menge{\menge{x,y},\menge{x,z}},\qquad\menge{\menge{x,y},\menge{y,z}}\qquad\tn{and}\qquad\menge{\menge{x,z},\menge{y,z}}.\]
	Then $ \vS $ has no small separations and hence is a fastidious separation system. However as every $ v\in V $ lies in $ A\cap B $ for some $ (A,B)\in\vS $ it is not possible to obtain an implementation of $ \vS $ by bipartitions of sets by deleting those elements from $ V $, as we did in the proof of Theorem~\ref{thm:char-bips-universe}.
\end{EX}
\COMMENT{
	J: The above example is a three-star, which has an obvious implementation by bipartitions of sets. This implementation ($ (x,yz),(xy,z) $ and $ (z,xy) $) can even be found by manipulating the separations of $ \vS $, but as far as I can see there is no underlying reason why something like this should always be possible.
}

In view of Example~\ref{ex:seps-no-implication}, if we want to prove that every fastidious separation system can be implemented by bipartitions of sets, we cannot use Theorem~\ref{thm:char-sets} but need to find a direct proof. The following example from~\cite{TreeSets} points us in the right direction:

\begin{EX}\label{ex:orientations}
	Let $ T $ be a tree and $ \vS $ the edge tree set of $ T=(V,E) $ as defined in~\cite{TreeSets}: the separation system on the set $ \vE(T) $ of oriented edges of $ T $ in which $ (v,w)<(x,y) $ for oriented edges $ (v,w),(x,y)\in\vE(T) $ if and only if $ \menge{v,w}\ne\menge{x,y} $ and the unique $ \menge{v,w} $--$ \menge{x,y} $-path in $ T $ joins $ w $ to $ x $. Then $ \vS $ contains no small separations and is therefore fastidious.
	
	The separation system $ \vS $ has a natural implementation by bipartitions of sets: for each separation $ \vs=(v,w)\in\vS $, the removal of the edge $ \{v,w\} $ from $ T $ partitions the vertices of $ T $ into exactly two connected components $ C_{(v,w)} $ and $ C_{(w,v)} $,	which contain $ v $ and $ w $ respectively. Furthermore, we have $ (v,w)\le(x,y) $ for separations in $ \vS $ if and only if $ C_{(v,w)}\sub C_{(x,y)} $. Thus it is easy to check that the map $ f\colon\vS\to\S\B(V) $ defined by
	\[f\braces{(v,w)}:=(C_{(v,w)}\,,\,C_{(w,v)})\]
	is an isomorphism between $ \vS $ and its image in $ \S\B(V) $ and hence an implementation of $ \vS $ by bipartitions.
	
	Additionally, the vertex set $ V $ of $ T $ can be described wholly in terms of $ \vS $, without referencing the tree $ T $: every vertex $ v $ of $ T $ induces a unique consistent orientation $ O_v $ of $ \vS $ by orienting each edge in $ E(T) $ towards $ v $.\footnote{Additionally, if $ T $ is finite, then every consistent orientation of $ \vS $ (viewed as orientation of $ E $) points to a unique vertex: each such orientation has precisely one sink.} It is easy to see, then, that for $ v\in V $ and $ (x,y)\in\vS $ we have $ v\in C_{(x,y)} $ if and only if $ (y,x)\in O_v $.
	
	This observation leads to another way of implementing $ \vS $ by bipartitions of a set. Let $ \vO=\vO(\vS) $ be the set of all consistent orientations of $ \vS $, and for $ \vs\in\vS $ let $ \vO_\vs $ be the set of all $ O\in\vO $ which contain $ \vs $. Let us define the map $ g\colon\vS\to\B\S(\vO) $ by setting
	\[g(\vs):=\braces{\vO_\sv,\vO_\vs}.\]
	Using the Extension Lemma~(\cite[Lemma 4.1]{AbstractSepSys}) it is straightforward to check that~$ g $ is an isomorphism between $ \vS $ and its image in $ \S\B(\vO) $ and hence an implementation of $ \vS $ by bipartitions of a set.
\end{EX}

For an arbitrary separation system $ \vS $ let $ \vO=\vO(\vS) $ be the set of consistent orientations of $ \vS $. For $ \vs\in\vS $ let us write $ \vO_\vs $ for the set of all orientations $ O\in\vO $ which contain $ \vs $. For any non-degenerate $ \vs\in\vS $ we have $ \vO=\vO_\vs\,\dot{\cup}\,\vO_\sv $ as orientations are anti-symmetric.

Remarkably, the map $ g $ defined in Example~\ref{ex:orientations} above which maps a separation $ \vs $ to $ \braces{\vO_\sv\,,\,\vO_\vs} $ is an isomorphism onto its image in $ \S\B(\vO) $ for all regular separation systems $ \vS $, not only those which are the edge tree set of some tree as in Example~\ref{ex:orientations}:

\begin{THM}[\cite{TreeSets}, Theorem~5.2]\label{thm:regular-bips}
	Given any regular separation system $ \vS $, the map $ f\colon\vS\to\S\B(\vO) $ given by $ \vs\mapsto(\vO_\sv,\vO_\vs) $ is an isomorphism of separation systems between $ \vS $ and its image in $ \S\B(\vO) $.
\end{THM}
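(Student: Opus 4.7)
The plan is to verify that $f$ (i) takes values in $\S\B(\vO)$, (ii) commutes with the involution, (iii) is order-preserving, and (iv) is order-reflecting. Injectivity and the isomorphism property onto the image follow automatically from (iii) and (iv). Steps (i) and (ii) are immediate: as $\vS$ is regular, every $\vs\in\vS$ is non-degenerate, so every $O\in\vO$ contains exactly one of $\vs$ and $\sv$. Hence $\vO_\sv$ and $\vO_\vs$ partition $\vO$, so $(\vO_\sv,\vO_\vs)\in\S\B(\vO)$; and $f(\sv)=(\vO_\vs,\vO_\sv)=f(\vs)^*$ by definition.

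For (iii), recall that a consistent orientation $O$ has the following equivalent property: whenever $\vr\le\vs$ for distinct underlying separations $r\ne s$, $\vs\in O$ forces $\vr\in O$, for otherwise $\rv$ and $\vs$ would be in $O$ together with $\vr\le\vs$, a forbidden pattern. Assume $\vr\le\vs$. Applying this property directly gives $\vO_\vs\subseteq\vO_\vr$; applying it to the equivalent relation $\sv\le\rv$ yields $\vO_\rv\subseteq\vO_\sv$. These two inclusions are precisely the definition of $f(\vr)\le f(\vs)$ in $\S\B(\vO)$, so $f$ is a homomorphism of separation systems.

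The heart of the proof is step (iv), and the only place where regularity is used in full strength. We show the contrapositive: if $\vr\not\le\vs$, then $f(\vr)\not\le f(\vs)$. It suffices to exhibit an $O\in\vO$ with $\rv\in O$ and $\vs\in O$, since then $O\in\vO_\vs\setminus\vO_\vr$ witnesses $\vO_\vs\not\subseteq\vO_\vr$. To produce this $O$, consider the partial orientation $P:=\{\rv,\vs\}$. A forbidden pair in $P$ would consist of distinct $a,b$ with $\va\le\vb$ and $\av,\vb\in P$; inspecting the two possible assignments of roles shows that each reduces to the condition $\vr\le\vs$, which is excluded by hypothesis. Hence $P$ is a consistent partial orientation, and by the Extension Lemma \cite[Lemma~4.1]{AbstractSepSys} (applicable because $\vS$ is regular) $P$ extends to a consistent full orientation $O\in\vO$ with the required properties. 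This completes the proof that $f$ is an isomorphism between $\vS$ and its image in $\S\B(\vO)$; the main obstacle, invoking the Extension Lemma on the carefully chosen seed $\{\rv,\vs\}$, is precisely what makes regularity essential.
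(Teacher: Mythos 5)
Your proof is correct, and it matches the approach the paper itself gestures at. The paper does not actually prove Theorem~\ref{thm:regular-bips} --- it imports it from \cite{TreeSets} --- but Example~\ref{ex:orientations} explicitly points to the Extension Lemma (\cite[Lemma 4.1]{AbstractSepSys}) as the key tool for checking that $g\colon\vs\mapsto(\vO_\sv,\vO_\vs)$ is an isomorphism onto its image, and that is exactly how you handle order-reflection: take the seed $\{\rv,\vs\}$, note that the only possible consistency violation in either role assignment reduces to $\vr\le\vs$ (which is excluded), and extend. Your decomposition into well-definedness, involution, order-preservation, and order-reflection is clean, and your observation that injectivity and the isomorphism-onto-image property follow from (iii) and (iv) together with antisymmetry of $\le$ is correct. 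Two small points worth being explicit about: first, regularity is also what guarantees the map lands in $\S\B(\vO)$ at all, since it rules out degenerate separations and hence ensures each consistent orientation contains exactly one of $\vs,\sv$ --- you note this, which is good; second, if $r=s$ (so $\vr=\sv$) the seed $P$ collapses to the singleton $\{\vs\}$, which is still consistent and still yields the required witness, so this degenerate case does not break the argument.
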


Due to the context of~\cite{TreeSets}, in~\cite{TreeSets} Theorem~\ref{thm:regular-bips} is only formulated for separation systems $ \vS $ that are nested (i.e. in which every two unoriented separations can be oriented so as to be comparable). However, this assumption is not necessary, and indeed, the proof of Theorem~\ref{thm:regular-bips} given in~\cite{TreeSets} does not use it.

Since a fastidious separation system has at most one small separation and is thus `almost regular' we can utilize Theorem~\ref{thm:regular-bips} to show that every fastidious separation system has an implementation by bipartitions of sets:

\begin{THM}\label{thm:char-bips}
	A separation system $ \vS $ can be implemented by bipartitions of sets if and only if it is fastidious.
\end{THM}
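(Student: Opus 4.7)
The plan is to prove both directions of the equivalence.

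The forward direction is immediate from Lemma~\ref{lem:smalls}: the only small bipartition in $\S\B(V)$ is $(\emptyset,V)$, which is also the least element of $\S\B(V)$, so in any implementation every small separation of $\vS$ maps to this least element and thus lies below every element of $\vS$.

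For the reverse direction, given a fastidious $\vS$, I would split into cases. If $\vS$ contains a degenerate separation $\vso=\svo$, then, since $\vso$ is small, fastidiousness forces $\vso$ to be both the least and greatest element, so $\vS=\{\vso\}$, which is implemented by bipartitions of $V:=\emptyset$. Otherwise $\vS$ is non-degenerate, and if in addition it has no small separation then it is regular and Theorem~\ref{thm:regular-bips} immediately yields the desired implementation $\vs\mapsto(\vO_\sv,\vO_\vs)$.

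In the remaining case, $\vS$ has a unique small separation $\vso\ne\svo$. I would set $\vT:=\vS\sm\{\vso,\svo\}$; this is non-degenerate and has no small separations (the only candidate, $\vso$, has been removed), hence is regular. Theorem~\ref{thm:regular-bips} applied to $\vT$ then gives an implementation $g\colon\vT\to\S\B(V)$ with ground-set $V:=\vO(\vT)$, which I would extend to $\vS$ by declaring $g(\vso):=(\emptyset,V)$ and $g(\svo):=(V,\emptyset)$.

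The main technical step is verifying that this extended $g$ remains an isomorphism onto its image. That it commutes with the involution is immediate, and order-preservation and -reflection between $\vso,\svo$ and the rest of $\vS$ follow because $(\emptyset,V)$ and $(V,\emptyset)$ are the least and greatest elements of $\S\B(V)$, mirroring the extremal roles of $\vso$ and $\svo$ in $\vS$ by fastidiousness. The main obstacle is injectivity: I must check that neither $(\emptyset,V)$ nor $(V,\emptyset)$ appears in $g(\vT)$. Since $g(\vs)=(\vO_\sv,\vO_\vs)$ for $\vs\in\vT$ (where $\vO$ now denotes the orientations of $\vT$), this reduces to showing that both $\vO_\vs$ and $\vO_\sv$ are non-empty for every $\vs\in\vT$, which I would obtain by applying the Extension Lemma of~\cite{AbstractSepSys} to the consistent partial orientations $\{\vs\}$ and $\{\sv\}$ of the regular system $\vT$.
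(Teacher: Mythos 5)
Your proof is correct and follows essentially the same route as the paper: reduce to the regular subsystem obtained by removing the unique small separation, apply Theorem~\ref{thm:regular-bips} with ground-set the consistent orientations, and extend by sending the small separation to $(\emptyset,V)$. The minor differences are that you reorganize the case split slightly (the paper first isolates the case $\vS=\{\vs,\sv\}$ before treating the general case, while you fold the non-degenerate two-element case into the main extension argument, which works since $\vO(\emptyset)$ is a one-element set), and that you supply an explicit injectivity check that the paper dismisses with ``clearly.'' On that last point, the Extension Lemma argument you give is fine, but there is an even shorter route that sidesteps it: Theorem~\ref{thm:regular-bips} already gives that $g\restriction\vT$ is an isomorphism onto its image, and isomorphisms of separation systems preserve smallness and co-smallness, so since $\vT$ is regular none of the images $(\vO_\sv,\vO_\vs)$ can equal $(\emptyset,V)$ or $(V,\emptyset)$.
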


\begin{proof}
	Since the only small bipartition of a set $ V $ is $ (\emptyset,V) $, any separation system implemented by bipartitions is fastidious.
	
	Now suppose that $ \vS $ is a fastidious separation system. If $ \vS $ is regular the assertion follows immediately from Theorem~\ref{thm:regular-bips}, so let us suppose that $ \vS $ is not regular. As $ \vS $ is fastidious it then has a unique small separation $ \vs $.
	
	We consider first the case that $ s $ is the only separation in $ \vS $, i.e. that $ \vS=\{\vs,\sv\} $. If $ s $ is degenerate, so $ \vs=\sv $, then $ \vS $ is isomorphic to $ \S\B(\emptyset) $. And if $ \vs\ne\sv $ then for any non-empty set $ V $ the map $ f\colon\vS\to\S\B(V) $ which maps $ \vs $ to $ (\emptyset,V) $ and $ \sv $ to $ (V,\emptyset) $ is an implementation of $ \vS $ by bipartitions of $ V $.
	
	Let us now suppose that $ \vS $ has elements other than $ s $. Then $ s $ cannot be degenerate: for if $ \vs=\sv $ then, for each $ \vt\in\vS $, we would have both $ \vs\le\vt $ and $ \sv=\vs\le\tv $ by assumption. The latter inequality is equivalent to $ \vt\le\vs $ and hence implies $ \vs=\vt $, contrary to the assumption that $ s $ is not the only separation in $ \vS $.
	
	 As $ \vs $ is the only small separation of $ \vS $ the sub-system $ \vSdash$ obtained from $ \vS $ by deleting~$ s $ is a regular separation system. By applying Theorem~\ref{thm:regular-bips} to $ \vSdash $ we obtain an implementation $ f' $ of $ \vSdash $ using $ V=\vO(\vSdash) $ as a ground-set. Let $ f\colon\vS\to\U\B(V) $ be the extension of $ f' $ to~$ \vS $ which maps $ \vs $ to $ (\emptyset,V) $ and $ \sv $ to $ (V,\emptyset) $. As $ \vs\le\vt $ for each $ \vt\in\vS $ and $ \vs $ is non-degenerate $ f $ clearly is an implementation of $ \vS $ by bipartitions of $ V $.
\end{proof}

\section{Graph separations}\label{sec:graphs}

Tangles, the study of which motivated the introduction of abstract separation systems in~\cite{AbstractSepSys}, were introduced in~\cite{GMX} in terms of graph separations. Therefore this instance of separation systems is of special interest. In this final section we shall characterize those separation systems that can be implemented by graph separations.

Given an undirected graph $ G=(V,E) $, we denote with $ \U(G) $ the universe of graph separations of $ G $: the sub-universe of $ \U(V) $ consisting of all those separations $ (A,B) $ for which $ G $ contains no edge from $ A\sm B $ to $ B\sm A $.

We say that a universe $ \vU $ has a {\em graphic implementation} if there exists a graph $ G $ such that $ \U(G) $ and $ \vU $ are isomorphic. Note that this notion differs slightly from the notions of implementability in Section~\ref{sec:implementations}: here we ask that $ \vU $ is isomorphic to $ \U(G) $ itself, and not just to some sub-universe of $ \U(G) $. The reason for this difference is that $ \U(G)=\U(V) $ for any graph $ G=(V,E) $ with no edges, and hence asking for universes which are isomorphic to a sub-universe of $ \U(G) $ for some graph $ G $ would be the same as asking for those which are isomorphic to a sub-universe of $ \U(V) $ for some set $ V $. The latter is our notion of strong implementations that we already discussed in Section~\ref{sec:implementations} and Theorem~\ref{thm:char-sets-universe}.

Furthermore in this section we shall only deal with finite universes and separation systems.

The aim of this section is then to characterize those universes of separations which have a graphic implementation.

Let us start with a couple of simple observations. Any graphic implementation is also a strong implementation, hence every $ \vU $ with a graphic implementation must be distributive and scrupulous by Theorem~\ref{thm:char-sets-universe}. Additionally $ \U(G) $ contains each separation of the form $ (X,V) $ for $ X\sub V $, that is to say, $ \U(G) $ contains all small separations of $ \U(V) $. These have a very particular structure: the set of small separations of $ \U(V) $ forms a boolean algebra. In fact we can say a bit more about this algebra: its maximal element will be $ (V,V) $, the only degenerate separation in $ \U(V) $. Therefore if $ \vU $ is to have a graphic implementation then the set of small separations in $ \vU $ must form a boolean algebra whose maximal element is degenerate in $ \vU $. It is easy to check that this latter condition already implies that $ \vU $ is scrupulous.

So, let us show that every finite universe $ \vU $ has a graphic implementation provided $ \vU $ is distributive and its set of small separations forms a boolean algebra with degenerate maximal element.

Our strategy for finding a graph $ G $ whose universe of separations is isomorphic to such a universe $ \vU $ will be as follows: first we shall apply Theorem~\ref{thm:char-sets-universe} to find some a implementation $ f\colon\vU\to\U(V) $ of $ \vU $ for some ground-set $ V $. We will take $ V $ as the vertex-set of our graph $ G $ and need to define the edges of $ G $ in such a way that the image of $ \vU $ under $ f $ in $ \U(V) $ is exactly $ \U(G) $. That means that for every $ \vs\in\vU $ with $ f(\vs)=(A,B) $ we cannot join a vertex from $ A\sm B $ to a vertex in $ B\sm A $, as then $ f(\vs) $ would not be a separation of $ G $. Conversely, if for two vertices $ v,w\in V $ there is no $ \vs\in\vU $ whose image $ (A,B) $ under $ f $ has $ v $ and $ w $ in $ A\sm B $ and $ B\sm A $ respectively then we must make $ v $ and $ w $ adjacent in $ G $ as otherwise there would be a separation of $ G $ which lies outside the image of $ \vU $ under $ f $. Thus we will define $ E(G) $ as the set of all pairs $ \{v,w\} $ of vertices for which there is no $ (A,B) $ in the image $ f(\vU) $ with $ v\in A\sm B $ and $ w\in B\sm A $. This way $ f $ will take its image in $ \U(G) $, but we still need to prove that $ f $ is onto on $ \U(G) $.

For the proof of surjectivity we will make use of the the structure of the set of small separations in $ \vU $. First we shall show that we can choose the strong implementation $ f\colon\vU\to\U(V) $ of $ \vU $ in such a way that every small separation of $ \U(V) $ lies in the image of $ \vU $ under $ f $. Then we shall use this to show that for every non-edge $ vw $ of $ G $ the separation $ \braces{V\sm\{v\}\,,\,V\sm\{w\}} $ lies in the image of $ \vU $ under~$ f $. The surjectivity of $ f $ onto $ \U(G) $ will then follow from the two facts that $ f(\vU) $ is closed under taking joins and meets and that every separation $ (A,B)\in\U(G) $ can be obtained from the separations of the above form through taking joins and meets.\\
\\
Given a finite universe $ \vU $ a separation $ \vs\in\vU $ is {\em atomic} if $ \vs $ is not the least element of $ \vU $, but the only element less than $ \vs $ is the least element of $ \vU $. Recall that we write $ \Small(\vU) $ for the set of small separations of $ \vU $.

The next lemma shows that for finite universes we can choose the strong implementation in Theorem~\ref{thm:char-sets-universe} in such a way that the image of each atomic element of $ \vU $ is of the form $ (\{v\}\,,\,V) $:

\begin{LEM}\label{lem:atomic-implementation}
	Let $ \vU $ be a finite universe. If $ \vU $ is distributive and scrupulous then $ \vU $ has a strong implementation by sets in which the image of every small atomic element of $ \vU $ is of the form $ (\{v\},V) $ for some $ v $ in the ground-set $ V $.
\end{LEM}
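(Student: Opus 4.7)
The plan is to invoke Theorem~\ref{thm:char-sets-universe} directly and then to delete a single redundant point from the resulting ground set. First I would apply Theorem~\ref{thm:char-sets-universe} to obtain a strong implementation $ f\colon\vU\to\U(V) $, where $ V $ is the set of all $ X\sub\vU $ satisfying conditions~(1)--(3) from the proof of that theorem and $ f(\vt)=(A_\vt,A_\tv) $ with $ A_\vt:=\{X\in V\mid \vt\in X\} $. Note that $ X:=\vU $ itself trivially lies in $ V $, and hence $ \vU\in A_\vt $ for every $ \vt\in\vU $.

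The key structural observation is that for each atomic small separation $ \vs $ of $ \vU $, the up-closure $ X_\vs:=\{\vr\in\vU\mid \vs\le\vr\} $ lies in $ V $, and $ X_\vs $ and $ \vU $ are the \emph{only} members of $ V $ containing $ \vs $. For $ X_\vs\in V $, up-closure and closure under meets are immediate, and scrupulousness yields $ \vs\le\vr $ for every co-small $ \vr $, so $ X_\vs $ contains all co-small elements. The only non-routine point is that $ \vU\sm X_\vs $ be closed under joins, and this is where distributivity and atomicity both enter: if $ \vs\le \vr_1\join \vr_2 $ then by distributivity $ \vs=(\vs\meet \vr_1)\join(\vs\meet \vr_2) $, and by atomicity each $ \vs\meet \vr_j $ lies in $ \{\bot,\vs\} $, where $ \bot $ denotes the least element of $ \vU $. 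Since $ \vs\ne\bot $, at least one of these meets equals $ \vs $, yielding $ \vs\le \vr_j $ for some $ j $, a contradiction. I expect this join-closure step to be the main obstacle, since it is exactly where both hypotheses of the lemma are used together.

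To pin down the rest of $ A_\vs $, I would use the finiteness of $ \vU $: every $ X\in V $ is a principal filter, since $ \vx_0:=\bigwedge X $ lies in $ X $ by~(1), and up-closure then forces $ X=\lfloor \vx_0 \rfloor $. Consequently $ \vs\in X $ is equivalent to $ \vx_0\le \vs $, and atomicity forces $ \vx_0\in\{\bot,\vs\} $, so $ X\in\{\vU,X_\vs\} $ as claimed.

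Having identified $ \vU\in V $ as the single redundant point, I would set $ V':=V\sm\{\vU\} $ and define $ f'\colon\vU\to\U(V') $ by $ f'(\vt):=(A_\vt\sm\{\vU\}\,,\,A_\tv\sm\{\vU\}) $. The identity $ A_\vt\cup A_\tv=V $ from the proof of Theorem~\ref{thm:char-sets-universe} ensures that the two sides still cover $ V' $, so $ f' $ takes its image in $ \U(V') $ and, by simple restriction, remains a universe-homomorphism; injectivity of $ f' $ follows from that of $ f $ because $ \vU\in A_\vt\cap A_\tv $ for every $ \vt\in\vU $, so $ f $ and $ f' $ carry the same information. Finally, for any atomic small $ \vs $ we obtain $ A_\vs\sm\{\vU\}=\{X_\vs\} $ from the structural observation, while $ A_\sv\sm\{\vU\}=V' $ because $ \sv $ is co-small and so lies in every $ X\in V $ by~(3); hence $ f'(\vs)=(\{X_\vs\},V') $, of the desired form.
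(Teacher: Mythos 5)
Your proof is correct and follows essentially the same route as the paper: you reuse the ground set $V$ from the proof of Theorem~\ref{thm:char-sets-universe}, delete the single redundant point $\vU\in V$, and use finiteness to observe that every $X\in V$ is a principal up-set $\ucl(\bigwedge X)$, so that atomicity pins down $A'_\vs$ to $\{\ucl(\vs)\}$. The one place you diverge is in establishing that $A'_\vs$ is nonempty: you verify directly that $X_\vs=\ucl(\vs)$ satisfies the defining conditions of $V$, which requires the distributivity-plus-atomicity argument for join-closedness of $\vU\sm X_\vs$; the paper instead gets nonemptiness for free from the fact that $f$ is already known to be an isomorphism (so the image of an atomic element cannot be the least element $(\emptyset,V')$ of $\U(V')$). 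Both are sound; the paper's shortcut saves a step, while your explicit construction of the witness $X_\vs$ makes the combinatorics a bit more transparent.
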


\begin{proof}
	Let $ V $ be defined as in the proof of Theorem~\ref{thm:char-sets-universe}: as the set of all $ X\sub\vU $ with the properties that $ X $ is up-closed and closed under taking meets, that $ \vU\sm X $ is down-closed and closed under taking joins, and that $ X $ contains all co-small separations. For $ \vr\in\vU $ let $ A_\vr $ be the set of all $ X\in V $ with~$ \vr\in X $. As the proof of Theorem~\ref{thm:char-sets-universe} goes on to show, the map $ \vr\mapsto\braces{A_\vr\,,\,A_\rv} $ is an isomorphism between $ \vU $ and its image in $ \U(V) $. We shall show that a slight modification of this already is an implementation of $ \vU $ with the desired property that the image of each small atomic separation is of the form $ (\{v\},V) $. %
	\COMMENT{
		In fact, we just need to delete one superfluous element from the ground-set which lies on both sides of all separations.
	}
	
	For this, observe first that $ \vU $ (as a set) is an element of the ground-set $ V $ by definition of $ V $. We have $ \vU\in A_\vr $ and $ \vU\in A_\rv $ for all $ \vr\in\vU $ since $ \vr,\rv\in\vU $. Let $ V':=V\sm\{\vU\} $, and for $ \vr\in\vU $ let $ A'_\vr $ be the set of all $ X\in V' $ that contain~$ \vr $ (equivalently: $ A'_\vr:=A_\vr\sm\{\vU\} $). Then the map $ f\colon\vU\to\U(V') $ with $ \vr\mapsto(A'_\vr\,,\,A'_\rv) $ is an isomorphism between $ \vU $ and its image in $ \U(V') $.
	We claim that this map $ f $ is the desired strong implementation of $ \vU $. For this we just need to show that the image under $ f $ of each small atomic element of $ \vU $ is of the form $ (\{v\},V') $ for some $ v\in V' $. So let $ \vs\in\vU $ be a small atomic element of $ \vU $.
	
	Since $ \vs $ is small its image in $ \U(V') $ is of the form $ (Y,V') $ for some $ Y\sub V' $. Then $ Y $ is non-empty: as $ \vs $ is atomic and thus not the least element of $ \vU $, its image under $ f $ cannot be $ (\emptyset,V') $, the least element of $ \U(V') $. To finish the proof we thus need to show that $ Y $ has at most one element. So let $ X $ be an element of $ Y=A_\vs $. Since $ \vU $ is finite and $ X\in V' $ is up-closed and closed under taking meets, $ X $ is the up-closure of its least element, say $ \vr $. As $ \vs\in X $ we have $ \vr\le\vs $. But thus, as $ \vs $ is atomic, either $ \vr=\vs $ or $ \vr $ is the least element of $ \vU $. The latter would imply $ X=\vU\notin V' $, so we must have $ X=\ucl(\vs) $, proving that~$ A_\vs $ contains exactly one element.
\end{proof}

We are now ready to prove that every finite universe that is distributive and whose small elements form a boolean algebra with degenerate maximal element has a graphic implementation:

\begin{THM}\label{thm:char-graphs}
	A finite universe $ \vU $ has a graphic implementation if and only if it is distributive and $ \Small(\vU) $ is a boolean algebra whose maximal element is degenerate in $ \vU $. 
\end{THM}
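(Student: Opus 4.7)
The forward direction is essentially immediate from the discussion preceding the statement: any graphic implementation $ \vU\cong\U(G) $ realises $ \vU $ as a sub-universe of $ \U(V(G)) $, so by Theorem~\ref{thm:char-sets-universe} the universe $ \vU $ must be distributive and scrupulous. Since every separation of the form $ (X,V(G)) $ belongs to $ \U(G) $ regardless of the edges of $ G $, the small separations of $ \vU $ correspond under the implementation to all of $ \P(V(G)) $, which is a boolean algebra whose maximal element $ (V(G),V(G)) $ is degenerate.

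For the backward direction I would follow the strategy outlined in the text preceding the theorem. The first task is to build a strong implementation $ f\colon\vU\to\U(V) $ such that every small separation of $ \U(V) $ lies in $ f(\vU) $. The natural candidate is to take $ V $ to be the set of atoms of the boolean algebra $ \Small(\vU) $ and to define $ f(\vr):=(A_\vr,A_\rv) $, where $ A_\vr:=\{\va\in V\mid\va\le\vr\} $. To see that $ f $ is a well-defined homomorphism I would combine distributivity with scrupulousness: because $ \vr\join\rv $ is always co-small, scrupulousness forces $ \va\le\vr\join\rv $ for every atom $ \va $, and distributivity then yields $ \va=(\va\meet\vr)\join(\va\meet\rv) $, so atomicity of $ \va $ gives $ \va\le\vr $ or $ \va\le\rv $; thus $ A_\vr\cup A_\rv=V $, and the same kind of computation shows $ f $ commutes with joins and meets. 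The image of $ f $ covers $ \Small(\U(V)) $ because $ \Small(\vU) $, being a finite boolean algebra, is generated under joins by its atoms, so the images of the small elements of $ \vU $ exhaust the small separations of $ \U(V) $.

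Once $ f $ is in hand, I would define the graph $ G $ on vertex set $ V $ by declaring $ \{v,w\}\in E(G) $ exactly when no separation in $ f(\vU) $ places $ v $ and $ w $ on strict opposite sides. Then $ f(\vU)\sub\U(G) $ holds automatically. For the reverse inclusion the key intermediate step is to show that for every non-edge $ \{v,w\} $, the separation $ (V\sm\{v\}\,,\,V\sm\{w\}) $ lies in $ f(\vU) $: this follows by applying the involution to
\[\bigl((A_0,B_0)\join(V\sm\{w\}\,,\,V)\bigr)\meet(V\,,\,V\sm\{v\})=(V\sm\{w\}\,,\,V\sm\{v\}),\]
where $ (A_0,B_0)\in f(\vU) $ is a witness with $ v\in A_0\sm B_0 $ and $ w\in B_0\sm A_0 $; the identity is a direct calculation using $ w\notin A_0 $ and $ v\notin B_0 $, and each building block is already in $ f(\vU) $ since $ f(\vU) $ is a sub-universe containing every small and co-small separation of $ \U(V) $. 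Then every $ (A,B)\in\U(G) $ with $ A':=A\sm B $ and $ B':=B\sm A $ both non-empty decomposes as
\[(A,B)=\bigvee_{w\in A'}\;\bigwedge_{v\in B'}(V\sm\{v\}\,,\,V\sm\{w\}),\]
with every indexed pair a non-edge of $ G $ by the definition of $ \U(G) $; the degenerate cases $ A'=\es $ and $ B'=\es $ reduce to small or co-small separations which are already in $ f(\vU) $.

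The most delicate point is verifying that the $ f $ defined on atoms of $ \Small(\vU) $ is injective. Showing $ f $ is a homomorphism and that its image covers $ \Small(\U(V)) $ is straightforward, but separating distinct $ \vr,\vr'\in\vU $ by an atomic witness needs a careful combination of distributivity with the boolean structure of $ \Small(\vU) $, essentially by arguing that each $ \vr\in\vU $ is pinned down within $ \vU $ by the atomic smalls below $ \vr $ together with the atomic smalls below $ \rv $.
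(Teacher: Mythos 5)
Your forward direction matches the paper's. Your backward direction takes a genuinely different route: instead of building the ground set abstractly, as the proof of Theorem~\ref{thm:char-sets-universe} does, and then refining it via Lemma~\ref{lem:atomic-implementation}, you take the ground set $V$ to be the set of atoms of the boolean algebra $\Small(\vU)$ and define $f$ directly in terms of which atoms lie below a given separation. This is a natural and more concrete construction, and the later stages of your argument --- defining $G$, showing that $(V\sm\{v\},V\sm\{w\})\in f(\vU)$ for every non-edge $vw$, and decomposing an arbitrary $(A,B)\in\U(G)$ as a join of meets of such separations --- are correct and run parallel to the paper's.

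There is, however, a genuine gap at exactly the point you flag as delicate: you never prove that $f$ is injective, only assert that the atomic small separations below $\vr$ and $\rv$ ``pin down'' $\vr$. This is not automatic --- the atoms of $\Small(\vU)$ need not be join-irreducibles of the whole lattice $\vU$, so no general Birkhoff-type argument applies off the shelf, and without injectivity $f$ need not be an isomorphism onto its image. The claim is true, and it can be closed with tools already in the paper. Let $\vm$ be the degenerate maximum of $\Small(\vU)$, and for $\vr\in\vU$ write $A_\vr$ for the set of atoms below $\vr$. Every atom lies below $\vm$, so an atom lies below $\vr\meet\vm$ exactly when it lies below $\vr$; since $\Small(\vU)$ is a finite boolean algebra, $\vr\meet\vm$ is the join of the atoms below it and is therefore determined by $A_\vr$. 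By the same reasoning $\rv\meet\vm$ is determined by $A_\rv$, and hence so is $\vr\join\vm=(\rv\meet\vm)^*$ (using $\vm=\mv$). Thus if $\vr,\vt\in\vU$ satisfy $A_\vr=A_\vt$ and $A_\rv=A_\tv$, then $\vr\meet\vm=\vt\meet\vm$ and $\vr\join\vm=\vt\join\vm$, and two applications of Lemma~\ref{lem:dist} (with $x=\vm$) give $\vr\le\vt$ and $\vt\le\vr$, so $\vr=\vt$. With this inserted your proof is complete, and arguably cleaner than the paper's since it dispenses with Lemma~\ref{lem:atomic-implementation} entirely.
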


\begin{proof}
	It is clear that for any graph $ G=(V,E) $ the universe $ \U(G) $ is distributive and $ \Small(\U(G)) $ is isomorphic to $ \mathcal{P}(V) $, which is a boolean algebra. The maximal small element of $ \U(G) $ is $ (V,V) $, which is degenerate.
	
	Now suppose that $ \vU $ is a distributive universe and that $ \Small(\vU) $ is a boolean algebra whose maximal element is degenerate. Then in particular there is a maximal small element $ \vm $ and for any small elements $ \vr,\vs\in\vU $ we have $ \vr\le\vm=\mv\le\sv $, so that $ \vU $ is scrupulous. Thus by Lemma~\ref{lem:atomic-implementation} there are a set $ V $ and a map $ f\colon\vU\to\U(V) $ such that $ f $ is an isomorphism of universes between $ \vU $ and its image in $ \U(V) $, and for which for every small atomic $ \vs\in\vU $ there is a $ v\in V $ such that $ f(\vs)=(\{v\}\,,\,V) $.
	
	Let us now define $ G=(V,E) $ to be the graph with vertex set $ V $ and an edge from $ v $ to $ w $ if and only if there is no separation $ (A,B) $ in $ f(\vU) $ with $ v\in A\sm B $ and $ w\in B\sm A $. Note that $ E $ is well-defined by this as $ (A,B)\in f(\vU) $ if and only if $ (B,A)\in f(\vU) $. Then $ \U(G) $ is a sub-universe of $ \U(V) $. We claim that $ f $ is an isomorphism of universes between $ \vU $ and $ \U(G) $ and thus a graphic implementation of $ \vU $. As $ f $ is an isomorphism between $ \vU $ and its image in $ \U(V) $ it suffices to show that $ f(\vU)=\U(G) $.
	
	To see that $ f(\vU)\sub\U(G) $, let $ (A,B) $ be any separation in $ f(\vU) $. But then $ (A,B) $ is also a separation of $ G $: for all $ v\in A\sm B $ and $ w\in B\sm A $, by definition of~$ E $, the separation $ (A,B) $ itself witnesses that there is no edge between $ v $~and~$ w $.
	
	For the converse inclusion $ \U(G)\sub f(\vU) $, observe first that $ f(\vU) $ is closed under taking inverses, meets, and joins. Let $ \vm $ be the maximal element of $ \Small(\vU) $. Since~$ \vm $ is degenerate by assumption and $ (V,V) $ is the only degenerate separation of $ \U(G) $ we must have $ f(\vm)=(V,V) $, and since $ \Small(\vU) $ is a boolean algebra, $ \vm $ is the join of small atomic elements. All small atomic separations $ \vs $ of $ \vU $ get mapped to separations of the type $ (\{v\}\,,\,V) $ with $ v\in V $. The join of those separations can only be $ f(\vm)=(V,V) $ if every separation of the form $ (\{v\}\,,\,V) $ lies in $ f(\vU) $. Thus we must have $ (\{v\}\,,\,V)\in f(\vU) $ for every $ v\in V $, from which it follows that $ \Small(\U(V))\sub f(\vU) $: for any $ X\sub V $ we have
	\[ (X,V)=\bigvee_{x\in X}(\{x\}\,,\,V), \]
	with the right hand side lying in $ f(\vU) $. As $ f(\vU) $ is closed under taking inverses we also have $ (V,X)\in f(\vU) $ for every $ X\sub V $.
	
	Let us now show that for every non-edge $ vw $ of $ G $ the separation
	\[{(V\sm\{v\}\,,\,V\sm\{w\})}\]
	lies in $ f(\vU) $. To see this, let $ v $ and $ w $ be two non-adjacent vertices of $ G $. The fact $ v $ and $ w $ are not adjacent means that by definition of $ E $ there is some $ (A,B)\in f(\vU) $ with $ v\in A\sm B $ and $ w\in B\sm A $. But then $ (B,A)\in f(\vU) $, too, and
	\[ (V\sm\{v\}\,,\,V\sm\{w\})=\class{(B,A)\join(V\sm\{v\}\,,\,V)}\meet(V\,,\,V\sm\{w\}), \]
	where the right hand side lies in $ f(\vU) $.
	
	To finish our proof that $ \U(G)\sub f(\vU) $, let $ (A,B)\in\U(G) $ be arbitrary. Then we can write $ (A,B) $ as
	\[ (A,B)=\bigwedge_{v \in V \sm A}\bigvee_{w \in V \sm B}(V\sm\{v\}\,,\,V\sm\{w\}), \]
	where the right hand side lies in $ f(\vU) $ as for all $ v\in V\sm A $ and $ w\in V\sm B $ there can be no edge between $ v $ and $ w $ as $ (A,B) $ is a separation of $ G $, giving $ (V\sm\{v\}\,,\,V\sm\{w\})\in f(\vU) $ by the above argument.
\end{proof}

\bibliographystyle{plain}
\bibliography{collective.bib}

\begin{thebibliography}{1}

\bibitem{AbstractSepSys}
R.~Diestel.
\newblock Abstract separation systems.
\newblock {\em Order}, 35:157--170, 2018.

\bibitem{TreeSets}
R.~Diestel.
\newblock Tree sets.
\newblock {\em Order}, 35:171--192, 2018.

\bibitem{ProfilesNew}
R.~Diestel, F.~Hundertmark, and S.~Lemanczyk.
\newblock Profiles of separations: in graphs, matroids, and beyond.
\newblock arXiv:1110.6207, to appear in {\em Combinatorica}.

\bibitem{TangleTreeAbstract}
R.~Diestel and S.~Oum.
\newblock Tangle-tree duality in abstract separation systems.
\newblock arXiv:1701.02509, 2017.

\bibitem{MonaLisa}
R.~Diestel and G.~Whittle.
\newblock Tangles and the {M}ona {L}isa.
\newblock arXiv:1603.06652.

\bibitem{GMX}
N.~Robertson and P.D. Seymour.
\newblock Graph minors. {X}. {O}bstructions to tree-decomposition.
\newblock {\em J.~Combin.\ Theory (Series B)}, 52:153--190, 1991.

\end{thebibliography}

\end{document}